\newcommand{\1}{\mbox{1}\hspace{-0.25em}\mbox{l}}
\newtheorem*{theorema}{Theorem A}
\newtheorem*{theoremb}{Theorem B}
\newtheorem*{theoremc}{Theorem C}
\newtheorem{prop}{Proposition}[section]
\newtheorem{lemma}[prop]{Lemma}
\newtheorem*{cor1}{Corollary}
\theoremstyle{definition}
\numberwithin{equation}{section}
\begin{document}

\author{Hiroki Takahasi}


\address{Department of Mathematics,
Keio University, Yokohama,
223-8522, JAPAN} 
\email{hiroki@math.keio.ac.jp}
\urladdr{\texttt{http://www.math.keio.ac.jp/~hiroki/}}

\subjclass[2010]{37A45, 37A50, 37A60, 60F10}
\thanks{{\it Keywords}: Large Deviation Principle; countable Markov shift; Gibbs state}

\title[Large deviation principles
 for countable Markov shifts]{Large deviation principles\\ for
 countable Markov shifts} 
 
 \date{\today}
 
 \maketitle

  \begin{abstract}
 We establish the large deviation principle for a topological Markov shift
 over infinite alphabet
which satisfies strong combinatorial assumptions 
called ``finite irreducibility'' or ``finite primitiveness''. 
More precisely, we assume the existence of
a Gibbs state for a potential $\phi$ in the sense of Bowen, and prove the level-2 Large Deviation Principles for
the distribution of empirical means under the Gibbs state, as well as that of weighted periodic points and iterated pre-images.
The rate function is written with the pressure and the free energy associated with the potential $\phi$.
  \end{abstract}



 \section{Introduction}
 The theory of large deviations is concerned with the concentration of probability measures.
A sequence $\{\mu_n\}_{n=1}^\infty$ of Borel probability measures on 
a topological space $\mathcal X$ satisfies
{\it the Large Deviation Principle} (LDP) if 
there exists a lower semi-continuous function $I\colon\mathcal X\to[0,\infty]$
 which satisfies the following:
\begin{itemize}
\item [-]{\rm (lower bound)} for every open set $\mathcal G\subset\mathcal X$,
\begin{equation}\label{low}
\liminf_{n\to\infty}\frac{1}{n} \log \mu_n(\mathcal G)\geq-\inf_{\mathcal G}I;\end{equation} 

\item[-] {\rm (upper bound)} for every closed set $\mathcal K\subset\mathcal X$,
\begin{equation}\label{up}
\limsup_{n\to\infty}\frac{1}{n} \log \mu_n(\mathcal K)\leq
-\inf_{\mathcal K}I,\end{equation}

\end{itemize} 
where $ \log 0=-\infty$ and $\inf\emptyset=\infty$. 
The function $I$ is called {\it a rate function}.
It is called {\it a good rate function} if
 the set $\{x\in\mathcal X\colon I(x)\leq\alpha\}$ is compact
  for every $\alpha\geq0$.
The last definition makes sense only if $\mathcal X$ is non-compact.

A number of important transformations with arithmetic or geometric origin
are modeled by symbolic dynamical systems over
infinite alphabet. 
The aim of this paper is to establish the LDP for a class of such symbolic systems,
with a view to providing refined descriptions of the original dynamics.

We introduce our setting in more precise terms.
   Let $S$ be a countable set and denote by $\mathbb N$ the set of non-negative integers.
  Denote by $S^{\mathbb N}$
  the set of all one-sided infinite sequences over $S$ endowed
with the product topology of the discrete topology on $S$, namely
 $$S^{\mathbb N}=\{x=(x_0,x_1,\ldots)\colon x_i\in S,\  i\in\mathbb N\}.$$
  The left shift $\sigma$ acts continuously on $S^{\mathbb N}$ by $(\sigma x)_i=x_{i+1}$.
   Let $A=(A_{ij})_{S\times S}$ be a matrix of zeros and ones with no column or row which is all made of zeros.
 {\it A} (one-sided) {\it topological Markov shift} $X$ generated by $A$ is defined by
$$X=\{x\in S^{\mathbb N}\colon A_{x_ix_{i+1}}=1 \text{ for every }i\in\mathbb N\}.$$
  If $S$ is a countably infinite (resp. finite) set, we call $X$ {\it a countable (resp. finite) Markov shift}.
  If all entries of $A$ are $1$, $X$ is called {\it a full shift}.
   The restriction of $\sigma$ to $X$ is still denoted by $\sigma$.
  For an $n$-string $w$ of elements of $S$,
  denote $|w|=n$.
 For two strings $v=v_0\cdots v_{m-1}$, $w=w_0\cdots  w_{n-1}$ of elements of $S$
 denote by $vw$ the concatenated string
$v_0\cdots v_{m-1}w_0\cdots w_{n-1}$ which is of length $|v|+|w|$.
This notation extends in the obvious way to concatenations of arbitrary finite number of strings.
An $n$-string $w_0w_1\cdots w_{n-1}$ 
  is {\it admissible} if $n=1$, or else $n\geq2$ and $A_{w_{i}w_{i+1}}=1$ holds for $i=0,1,\ldots,n-1$.
 Denote by $E^n$ the set of admissible strings of length $n$ and put $E^*=\bigcup_{n=1}^\infty E^n$.
 For convenience, put $E^0=\emptyset$ and
  $|w|=0$, $vw=v=wv$
 for $w\in E^0$ $v\in E^*$.

 For each $w=w_0\cdots w_{n-1}\in E^n$ 
define {\it an $n$-cylinder} by
 $$[w]=[w_0,\ldots,w_{n-1}]=\{x\in  X\colon x_i=w_i
 \text{ for }i=0,\ldots,n-1\}.$$
For a subset $W$ of $E^*$ write $[W]=\bigcup_{w\in W}[w]$.
 For each $(a,b)\in S\times S$ and an integer $n>1$ define
$$E^n(a,b)=\{w\in E^n\colon w_0=a\text{ and }w_{n-1}=b\},$$
and 
$$E^n(a)=\bigcup_{b\in S}E^n(a,b).$$

 Let $\phi\colon X\to\mathbb R$ be a function.
 For an integer $n\geq1$ write
$ S_n\phi=\sum_{i=0}^{n-1}\phi\circ \sigma^i$, and put
$S_0\phi=0$ for convenience.
 A Borel probability measure $\mu_\phi$ on $X$ is {\it a Gibbs state (in the sense of Bowen) for the potential $\phi$}
(cf. \cite{Bow75,MauUrb01,Rue78,Sar99})
if there exist constants $c_0\geq1$ and $P\in\mathbb R$ such that
 for every $n\geq1$
 and every $x=(x_i)_{i\in\mathbb N}\in X$,
 \begin{equation}\label{Gibbs}
 c_0^{-1}\leq\frac{\mu_\phi  [x_0,\ldots,x_{n-1}] }{\exp\left(-Pn+S_n\phi(x)\right)}\leq c_0.
 \end{equation}
 
It is now classical \cite{Bow75,Rue78} that for a topologically mixing, finite 
Markov shift and a H\"older continuous potential $\phi$ there exists a unique
 $\sigma$-invariant Gibbs state, and it coincides with the unique equilibrium state for $\phi$ characterized by the variational principle,
and the constant $P$ in \eqref{Gibbs} equals the topological pressure of $\phi$.
The construction of $\sigma$-invariant Gibbs states for countable Markov shifts
was carried out by Sarig \cite{Sar99,Sar03}, and 
by Mauldin and Urba\'nski \cite{MauUrb01}
under weaker assumptions on transition matrices and stronger\footnote{
In \cite{MauUrb01}, continuity assumptions on $\phi$ stronger than \cite{Sar03}  were used,
but the proof in \cite{MauUrb01} works verbatim in the context of \cite{Sar03}.}  assumptions on potentials
than \cite{Sar99,Sar03}.
Our presentation of main results 
follows \cite{MauUrb01} in order to work with the weakest possible assumptions
on both transition matrices and potentials.
We assume the existence of a Gibbs state, and establish the LDP for several interesting sequences of measures.
Note that \eqref{Gibbs} differs from the definition of the Gibbs state 
in statistical mechanics \cite{Rue78}.
For the LDPs with respect to the Gibbs state with shift-invariant absolutely summable interactions,
see \cite{Bry92,Com86,EKW94,FolOre88,Oll88} and \cite[Theorem 8.6]{RasSep}.

 Denote by $\mathcal M$ the space of Borel probability measures on $X$
 endowed with the weak*-topology.
We establish the (level-2) LDP 
for the following three sequences of 
 Borel probability measures on $\mathcal M$: 
  
  \begin{itemize}
 
\item[]\noindent{1. (Empirical means).}  
For each $x\in X$ and an integer $n\geq1$ define 
$$\delta_x^n=\frac{1}{n}\sum_{i=0}^{n-1}\delta_{\sigma^ix},$$
  with $\delta_{\sigma^ix}$ the unit point mass at $\sigma^ix$.
Denote by $\xi_n$ the distribution of the $\mathcal M$-valued random variable 
 $x\mapsto\delta_x^n$ on the probability space $(X,\mu_\phi)$;

\item[]\noindent{2. (Weighted periodic points).} 
Let $A$ be a countable subset of $X$.
For each integer $n\geq1$ put $$Z_n(\phi,A)=\sum_{x\in A}\exp S_n\phi(x).$$
Define
$$\eta_n=\frac{1}{Z_n(\phi,{\rm Per}_n(\sigma))}
\sum_{x\in{\rm Per}_n(\sigma)} \exp S_n\phi(x)\delta_{\delta_x^n},$$
with ${\rm Per}_n(\sigma)=\{x\in X\colon\sigma^nx=x\}$ and
$\delta_{\delta_x^n}$ the unit point mass at $\delta_x^n$;

\item[]{3. (Weighted iterated pre-images).} 
Fix $y\in X$ and define
$$\zeta_{y,n}=
 \frac{1}{Z_n(\phi,\sigma^{-n}y)}\sum_{x\in\sigma^{-n}y} \exp S_n\phi(x)\delta_{\delta_x^n},$$
with $\sigma^{-n}y=\{x\in X\colon\sigma^nx=y\}$.
\end{itemize}


 For each $\sigma$-invariant measure $\mu\in\mathcal M$,
 denote by $h(\mu)$ the Kolmogorov-Sina{\u\i} entropy of $\mu$ with respect to $\sigma$.
 It is possible that $h(\mu)=\infty$ if $\#S=\infty$.
 If $\sup\phi<\infty$ then
define
$$\mathcal M_\phi(\sigma)=\left\{\mu\in\mathcal M\colon\text{$\mu$ is $\sigma$-invariant and  $\int\phi d\mu>-\infty$}\right\}.$$
 The condition $\sup\phi<\infty$ guarantees that $\int\phi d\mu$ is well-defined for every $\mu\in\mathcal M$,
 though possibly $\int\phi d\mu=-\infty.$

A countable Markov shift $X$ is {\it finitely irreducible}
if there exists a finite set $\Lambda\subset E^*$ 
such that for all $i,j\in E^*$ there exists $\lambda\in\Lambda$
for which $i\lambda j\in E^*$.
If $X$ is finitely irreducible and the finite set $\Lambda$
consists of strings of the same length $N$, then $X$ is called {\it finitely primitive}.
Notice that the set $\Lambda$ associated either with a finitely irreducible or
primitive matrix can be taken to be empty for the full shift
$X=S^\mathbb N$ (in which case $N=0$).
The finite primitiveness implies that the shift map is topologically mixing.

The construction of (shift-invariant) Gibbs states in \cite{MauUrb01,MauUrb03} assumes
the finite irreducibility or primitiveness, and we also require these conditions.
In the case $X$ is topologically mixing and $\phi$ has summable variations, the finite primitiveness is a necessary condition
for the existence of a shift-invariant Gibbs state \cite{Sar03}.


\begin{theorema}
Let $X$ be a finitely irreducible countable Markov shift, $\phi\colon X\to\mathbb R$ a
measurable function and $\mu_\phi$ a Gibbs state for the potential $\phi$. 
Then $\{\xi_n\}$ is exponentially tight and
satisfies the LDP with the convex
 good rate function $I$ 
 given by
\begin{equation*}\label{rate}
 I(\mu)
=
-\inf_{\mathcal G \ni \mu}\sup_{\mathcal G}F.
\end{equation*}
The infimum is taken over all open sets $\mathcal G\subset\mathcal M$ containing $\mu$,
and $F \colon \mathcal M \to [-\infty, 0]$ is defined by
\begin{equation}\label{ef}
\begin{split} F(\nu)
=
\begin{cases}-P+h(\nu)+\int\phi d\nu &\text{ if $\nu\in\mathcal M_\phi(\sigma)$};\\ 
-\infty&\text{ otherwise.}\end{cases}\end{split}\end{equation}
\end{theorema}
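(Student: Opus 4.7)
The plan is to exploit the Gibbs property \eqref{Gibbs} to convert the $\mu_\phi$-measure of cylinders into exponential weights $\exp(-Pn + S_n\phi)$, and to extract the entropy term in $F$ by counting cylinders whose empirical averages populate a given weak*-neighborhood. The proof splits into three pieces: exponential tightness, an open-set upper bound, and an open-set lower bound. Since $I(\mu) = -\inf_{\mathcal G \ni \mu}\sup_{\mathcal G} F$ is automatically lower semicontinuous, once one establishes $\limsup \frac{1}{n}\log \xi_n(\mathcal G) \leq \sup_{\mathcal G} F$ and $\liminf \frac{1}{n}\log \xi_n(\mathcal G) \geq F(\nu)$ for every open $\mathcal G$ and every $\nu\in\mathcal G$, the statements \eqref{low} and \eqref{up} follow: \eqref{low} immediately, and the closed-set upper bound by combining exponential tightness with a standard covering argument.

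For the upper bound, a basic weak*-open neighborhood $\mathcal G$ of $\mu \in \mathcal M$ is specified by finitely many bounded continuous functions $g_1,\ldots,g_d$ and a tolerance $\epsilon$. I would approximate each $g_j$ by a function depending only on the first $k$ coordinates, so that the event $\{\delta_x^n \in \mathcal G\}$ becomes, up to an $\epsilon$-shift in the defining inequalities, a union of cylinders of length $n+k-1$. Applying the Gibbs upper bound to each such cylinder gives
$$\xi_n(\mathcal G) \leq c_0 \sum_{w \in \mathcal W_n} \exp\bigl(-Pn + S_n\phi(x_w)\bigr)$$
for representatives $x_w \in [w]$ and a suitable collection $\mathcal W_n \subset E^{n+k-1}$. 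Partitioning $\mathcal W_n$ according to the approximate empirical distribution of $k$-blocks along representative orbits and using an entropy count (the cylinders realizing a prescribed empirical distribution number at most $\exp(nh(\nu)+o(n))$) should bound the right-hand side by $\exp(n(\sup_{\mathcal G} F + o(1)))$.

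The lower bound is more delicate. For $\nu \in \mathcal M_\phi(\sigma)$ with $F(\nu)>-\infty$, I would first reduce to the case that $\nu$ is ergodic and supported on a finite subsystem of $X$: finite irreducibility guarantees that every finite sub-alphabet can be completed into a topologically transitive finite Markov subshift, and a general $\nu$ is approximated in weak* and entropy by ergodic measures of this kind. On such a subsystem Birkhoff's theorem and Shannon--McMillan--Breiman produce, for each large $n$, at least $\exp(n(h(\nu)-\epsilon))$ cylinders $[w]$ of length $n$ whose representatives satisfy both $|S_n\phi(x)/n - \int\phi\,d\nu|<\epsilon$ and $\delta_x^n \in \mathcal G$; the Gibbs lower bound then delivers $\xi_n(\mathcal G) \geq c_0^{-1}\exp(n(F(\nu)-2\epsilon))$, which suffices after letting $\epsilon\to 0$.

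Exponential tightness I expect to be the main obstacle, since the non-compactness of $X$ forces us to rule out escape of mass to infinity at an exponential rate. Here I would again use finite irreducibility combined with \eqref{Gibbs} to bound, for a finite $S_0 \subset S$, the $\mu_\phi$-probability that the orbit empirical measure places more than $\delta$ mass on cylinders $[a]$ with $a\notin S_0$, and show that this probability decays exponentially in $n$ as $S_0$ is enlarged. A secondary difficulty is the mere measurability of $\phi$: $S_n\phi$ is not constant on $n$-cylinders, so all estimates must be tracked at the level of specific representatives $x_w$ and compared through \eqref{Gibbs} itself rather than through oscillation bounds on $\phi$. Convexity and the good-rate-function property of $I$ then follow from the affinity of $h(\cdot)+\int\phi\,d(\cdot)$ on $\mathcal M_\phi(\sigma)$ together with the exponential tightness already established.
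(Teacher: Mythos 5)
Your overall architecture (lower bound for open sets, upper bound reduced to compact sets via exponential tightness, lower semicontinuity of $I$ built in by definition) matches the paper, but two of your three main steps rest on arguments that break down precisely because the alphabet is infinite. For the upper bound, the counting claim that the cylinders whose empirical measure realizes a prescribed weak*-approximate distribution number at most $\exp(nh(\nu)+o(n))$ is false here: a basic weak*-neighborhood constrains only finitely many (uniformly continuous) functionals, so infinitely many $n$-strings — e.g.\ strings agreeing with a fixed one except on a sparse set of positions carrying arbitrarily large symbols — have empirical measures in the same neighborhood, and the count is infinite. Moreover $\nu\mapsto h(\nu)+\int\phi\,d\nu$ is not upper semicontinuous on $\mathcal M$ (see the Jenkinson--Mauldin--Urba\'nski example recalled at the end of the paper), so even after a truncation you cannot pass from approximate types to $\sup_{\mathcal G}F$ by a limiting/compactness argument. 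The paper avoids counting altogether: the relevant (possibly infinite) family of cylinders is first cut down to a finite subfamily carrying most of its $\mu_\phi$-mass, then closed up, via the finite irreducibility and the distortion Lemma \ref{distor}, into finitely many finite Markov systems based at symbols of $\mathbb N_\Lambda$; Lemma \ref{horse} (the variational principle on the associated compact subsystem) then produces an invariant measure $\mu^a$ lying in the $\epsilon$-enlarged neighborhood with $\log\mu_\phi[G^n]\le F(\mu^a)(n-1)+{\rm const}$. That is the idea missing from your sketch. (A smaller point: uniform approximation by functions of finitely many coordinates is only available for $\varphi\in C_u(X)$, not all of $C(X)$; one must first note that $C_u(X)$ generates the weak*-topology.)

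The exponential tightness sketch also has a genuine flaw: sets of measures of the form $\{\nu\colon\nu([S_0]^c)\le\delta\}$ with $S_0\subset S$ finite are not relatively compact in $\mathcal M$, because each $1$-cylinder $[a]$ is itself non-compact in $X$ — constraining only the mass of the zeroth coordinate does not prevent escape in later coordinates. One needs compact sets $\Gamma=\{x\colon x_i\le N_i\ \forall i\}\subset X$ with growing thresholds, and a uniform exponential bound on $\mu_\phi\{x\colon\delta_x^n(\Gamma^c)\ge 1/\ell\}$; the lack of independence is not overcome by a one-line appeal to \eqref{Gibbs} but by the free-return-time/depth decomposition combined with Lemma \ref{distor}(b) and a combinatorial count of return configurations (Lemma \ref{expo}), which is the technical core of Proposition \ref{tight} and is absent from your outline. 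Finally, in the lower bound your reduction to ergodic measures supported on \emph{finite} subsystems with $h(\nu_k)\to h(\nu)$ and $\int\phi\,d\nu_k\to\int\phi\,d\nu$ is asserted rather than proved, and it is delicate since $\phi$ is unbounded below; the paper instead uses the entropy-density theorem of \cite{Tak} in the form of Lemma \ref{specification} (which tracks $\int\phi\,d\mu_k$ as well) and then works directly on the countable shift through Shannon--McMillan--Breiman and Birkhoff, so no finite-subsystem approximation is needed there.
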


To show the LDP in this non-compact setting, it is necessary to control escapes of probability masses to infinity.
The exponential tightness (see Proposition \ref{tight} for the definition)
asserts that masses are concentrated on compact sets, at least on an exponential scale.
This property is used to treat non-compact closed sets.

If $X$ is the full shift and 
 $m$ is a probability measure on $S$ such that
 $m[a]>0$ holds for every $a\in S$,
then the product measure $m^{\otimes \mathbb N}$ 
is the unique shift-invariant Gibbs state for the potential  $\phi(x)=-\log m[x_0].$
The sequence of $\mathcal M$-valued random variables $x\mapsto \delta_{\sigma^nx}$ $(n=1,2,\ldots)$ on $(X, m^{\otimes \mathbb N})$ are independent
and identically distributed, and 
the LDP for the corresponding $\{\xi_n\}$ is known as Sanov's theorem. 
Theorem A 
allows for the lack of independence introduced by the
potential $\phi$.

The finite primitiveness can be used to find periodic points of the same periods and iterated pre-images
of the same lengths. We obtain the following result.

\begin{theoremb}
Let $X$ be a finitely primitive countable Markov shift, $\phi\colon X\to\mathbb R$ a
measurable function and $\mu_\phi$ a Gibbs state for the potential $\phi$. 
The  $\{\eta_n\}$ and $\{\zeta_{y,n}\}$ $(y\in X)$ are exponentially tight and
satisfy the LDP with the same rate function as in Theorem A.
\end{theoremb}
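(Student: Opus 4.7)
My plan is to deduce the LDPs for $\{\eta_n\}$ and $\{\zeta_{y,n}\}$ from Theorem~A by comparing their mass on open and closed subsets of $\mathcal M$ directly with that of $\{\xi_n\}$, using finite primitiveness to produce periodic points and pre-images in every cylinder.

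First I would set up a cylinder comparison. Under finite primitiveness with connecting set $\Lambda$ of common length $N$, for every $w=w_0\cdots w_{n-1}\in E^n$ there exists $\lambda\in\Lambda$ with $w_{n-1}\lambda w_0$ admissible, so the periodic point $(w\lambda)^\infty$ lies in ${\rm Per}_{n+N}(\sigma)\cap[w]$; similarly, there exists $\lambda'\in\Lambda$ with $w\lambda' y_0y_1\cdots$ admissible, giving a point in $\sigma^{-(n+N)}y\cap[w]$. Combined with the bounded distortion already built into Bowen's Gibbs inequality \eqref{Gibbs} --- which forces $\sup_{[w]}S_n\phi-\inf_{[w]}S_n\phi\le 2\log c_0$ on every $n$-cylinder --- these existence statements and their converses (any periodic point or pre-image in $[w]$ contributes a weight comparable to $\mu_\phi[w]\,e^{Pn}$) give two-sided bounds of the form
$$Z_n(\phi,{\rm Per}_n(\sigma))\asymp Z_n(\phi,\sigma^{-n}y)\asymp e^{Pn},$$
with constants depending only on $c_0$, $N$, $|\Lambda|$, and $\sup_\Lambda S_N\phi$.

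Next I would transfer the LDP set by set. The crucial observation is that for any weak*-continuous $f\colon\mathcal M\to\mathbb R$ arising from integration against a function depending on $k$ coordinates of $X$, one has $\bigl|\int f\,d\delta_x^n-\int f\,d\delta_z^n\bigr|=O(k/n)$ uniformly over $x,z$ sharing an $n$-cylinder. Consequently, for any weak*-neighborhood $\mathcal U$ of a closed set $\mathcal K\subset\mathcal M$, the event $\delta_x^n\in\mathcal K$ for some $x\in[w]$ forces $\delta_z^n\in\mathcal U$ for every $z\in[w]$ once $n$ is large. Grouping periodic points (resp.\ pre-images) by their initial $n$-cylinder and invoking the bounds from the previous step yields
$$\eta_n(\mathcal K),\ \zeta_{y,n}(\mathcal K)\le C\,\mu_\phi\bigl(\{x\colon\delta_x^n\in\mathcal U\}\bigr)=C\,\xi_n(\mathcal U),$$
with $C$ independent of $n$. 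Letting $\mathcal U$ shrink to $\mathcal K$ and applying the upper bound of Theorem~A delivers the LDP upper bound with rate function $I$; an analogous argument with $\mathcal U$ nested strictly inside an open $\mathcal G$ delivers the lower bound. Exponential tightness of $\{\eta_n\}$ and $\{\zeta_{y,n}\}$ is inherited from that of $\{\xi_n\}$ by applying the same estimate to complements of compact subsets of $\mathcal M$.

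The main obstacle I anticipate is managing the $O(N)$-length discrepancy in the cylinder comparison: under only the standing assumption that $\phi$ is measurable, one has no a~priori regularity control on $S_N\phi$ along connecting words $\lambda$, and one must use the Gibbs inequality \eqref{Gibbs} simultaneously at lengths $n$ and $n+N$ (comparing $\mu_\phi[w\lambda]$ with $\mu_\phi[w]$) to propagate the required uniform bound. Once this uniform bound is secured, the transfer of the LDP and the exponential tightness are routine, and the rate function is automatically identified with the one from Theorem~A because the comparison reduces the problem directly to the LDP for $\{\xi_n\}$.
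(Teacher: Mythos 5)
Your overall strategy is genuinely different from the paper's: instead of re-running the lower bound, the upper bound and the tightness argument separately for $\{\eta_n\}$ and $\{\zeta_{y,n}\}$ (which is what the paper does in Steps 2--3 of Proposition \ref{lowbound}, Step 3 of Proposition \ref{upper0} and Steps 3--4 of Proposition \ref{tight}, all resting on Proposition \ref{pressure}), you try to reduce Theorem B to Theorem A by a cylinder comparison, using that each $n$-cylinder carries at most one point of ${\rm Per}_n(\sigma)$ and at most one point of $\sigma^{-n}y$, the uniform oscillation bound for $S_n\phi$ coming from \eqref{Gibbs}, and $Z_n\asymp e^{P(\phi)n}$ (which is essentially Proposition \ref{pressure}, and your splicing bound on $\mu_\phi[w\lambda]$ versus $\mu_\phi[w]$ is Lemma \ref{distor}). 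For the two LDP bounds this can be made to work, but not in the form you state it: the claim that for an arbitrary closed $\mathcal K\subset\mathcal M$ and an arbitrary open $\mathcal U\supset\mathcal K$, ``$\delta_x^n\in\mathcal K$ for some $x\in[w]$ forces $\delta_z^n\in\mathcal U$ for all $z\in[w]$'' is false in the non-compact space $\mathcal M$, since there is no uniform gap between $\mathcal K$ and $\mathcal U^c$ (take $\mathcal K$ a closed sequence escaping to infinity and $\mathcal U$ a union of balls with radii shrinking along it). What is true, and what you should use, is that $\rho(\delta_x^n,\delta_z^n)\le\epsilon_n\to0$ uniformly over $x,z$ in a common $n$-cylinder for a metric $\rho$ of the weak*-topology; this gives $\eta_n(\mathcal K)\le C\,\xi_n(\mathcal K^{\delta})$ for each fixed $\delta>0$ and large $n$, where $\mathcal K^\delta$ is the $\delta$-fattening, and then you need $\inf_{\overline{\mathcal K^\delta}}I\to\inf_{\mathcal K}I$ as $\delta\to0$, which requires the goodness of $I$ asserted in Theorem A. The lower bound similarly needs the $\delta$-shrinking of $\mathcal G$ around a fixed $\mu\in\mathcal G$ rather than an unspecified ``$\mathcal U$ nested inside $\mathcal G$'', plus the bookkeeping that $(w\lambda)^\infty$ has period $n+N$, so one must splice $w\in E^{n-N}$ and absorb an $O(N/n)$ error; these are repairable.

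The genuine gap is exponential tightness, which Theorem B asserts and which you claim is ``inherited from that of $\{\xi_n\}$ by applying the same estimate to complements of compact subsets of $\mathcal M$.'' This step fails. The comparison only yields $\eta_n(\mathcal K_L^c)\le C\,\xi_n\bigl((\mathcal K_L^c)^{\epsilon_n}\bigr)$, and the fattened set is not controlled by the exponential tightness of $\{\xi_n\}$: compact subsets of $\mathcal M$ have empty weak*-interior (an arbitrarily small weak*-perturbation moves mass to infinity), so no compact set contains a uniform fattening of $\mathcal K_L$, and concretely the $\epsilon$-fattening in $X$ of the complement of a tightness set $\Gamma_\ell=\{x\colon x_i\le N_i\ \forall i\}$ is typically all of $X$, making the transferred bound vacuous. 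The point is that the relevant events $\{\delta_x^n(\Gamma_\ell^c)\ge 1/\ell\}$ are not stable under small weak*-perturbations of the empirical measure, so tightness cannot be passed through the metric comparison as a black box. You must either prove it directly as the paper does --- apply the recurrence estimate of Lemma \ref{expo} to the $n$-cylinders of periodic points and pre-images, using \eqref{Gibbs} and Proposition \ref{pressure} to convert the weights $\exp S_n\phi(x)$ into $\mu_\phi$-masses of cylinders --- or first complete the full LDP with the good rate function $I$ and invoke the standard converse that an LDP with a good rate function on a Polish space implies exponential tightness; the naive inheritance argument as written does not do it.
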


Theorems A and B extend
the results of Takahashi \cite{Tak84,Tak87} and Kifer \cite{Kif90,Kif94} for 
finite Markov shifts to countable ones.
In \cite{Tak84,Tak87}, Takahashi treated the distribution of empirical means under the Gibbs state.
In \cite{Kif90}, Kifer provided a unified functional analytic approach 
 to establishing the LDP which is in particular 
applicable to finite Markov shifts.
 In \cite{Kif94} he also obtained the LDP
for the distribution of periodic points. 
 Orey and Pelikan \cite{OrePel89} proved the LDP for uniformly hyperbolic systems (Anosov diffeomorphisms),
which via Markov partitions can be modeled by finite Markov shifts. 
 The rate functions in these settings are given
by the difference between the pressure and the free energy, while
 in Theorems A and B it is not possible to take $-F$ as a rate function.
For instance, if $X$ is the full shift then $-F$ is not lower semi-continuous
  (see the remark at the end of this paper). 

It is also relevant to put an initial condition and consider the LDP.
For each $a\in S$, $y\in X$ and an integer $n\geq1$ define
 $$\eta_{a,n}=\frac{1}{Z_{n}(\phi,[a]\cap{\rm Per}_n(\sigma))}\sum_{x\in[a]\cap{\rm Per}_n(\sigma)} \exp S_n\phi(x)\delta_{\delta_x^n};$$
  $$\zeta_{a,y,n}=\frac{1}{Z_{n}(\phi,[a]\cap\sigma^{-n}y)}\sum_{x\in[a]\cap\sigma^{-n}y} \exp S_n\phi(x)\delta_{\delta_x^n}.$$
The latter distributions
define a thermodynamic limit with boundary condition $y$, conditioned on $[a]$,
see \cite{Sar15} for details.

\begin{theoremc}
Let $X$ be a finitely primitive countable Markov shift, $\phi\colon X\to\mathbb R$ a
measurable function and $\mu_\phi$ a Gibbs state for the potential $\phi$. 
For every $a\in S$ and $y\in X$,
the $\{\eta_{a,n}\}$ and $\{\zeta_{a,y,n}\}$ are exponentially tight
and
satisfy the LDP with the same rate function as in Theorem A.
\end{theoremc}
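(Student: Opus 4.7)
The plan is to reduce Theorem C to Theorem B by showing that imposing the initial condition $x_0=a$ changes the relevant weighted partition functions only by a subexponential factor. Two ingredients suffice: the Gibbs property \eqref{Gibbs}, which applied with $n=1$ forces $\mu_\phi[a]>0$ and which yields bounded distortion $|S_n\phi(x)-S_n\phi(\tilde x)|\le 2\log c_0$ whenever $x,\tilde x$ lie in the same $n$-cylinder; and finite primitiveness, which supplies connecting words in a finite set $\Lambda$ of length $N$.

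For exponential tightness and the upper bound, I would start from the trivial inequality
$$\eta_{a,n}(\mathcal K)\le \frac{Z_n(\phi,{\rm Per}_n(\sigma))}{Z_n(\phi,[a]\cap{\rm Per}_n(\sigma))}\,\eta_n(\mathcal K),$$
valid for every closed $\mathcal K\subset\mathcal M$, and the analogous inequality for $\zeta_{a,y,n}$. Using \eqref{Gibbs} to rewrite $e^{S_n\phi(x)}\asymp e^{Pn}\mu_\phi[x_0\cdots x_{n-1}]$, and using finite primitiveness to bound below the total $\mu_\phi$-mass of $n$-cycles starting at $a$, both partition functions turn out to be of exponential order $e^{Pn}$. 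Hence the ratio is subexponential, and both the upper bound and exponential tightness transfer from Theorem B.

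For the lower bound at an open $\mathcal G\ni\mu$ with $F(\mu)>-\infty$, pick a smaller open $\mathcal G_0\ni\mu$ with $\overline{\mathcal G_0}\subset\mathcal G$. By the lower bound in Theorem B, there is a family of cycles $x\in{\rm Per}_n(\sigma)$ with $\delta_x^n\in\mathcal G_0$ carrying total weight $\gtrsim e^{Pn-n\inf_{\mathcal G_0}I-n\varepsilon}$. For each such $x$ with word $w=x_0\cdots x_{n-1}$, finite primitiveness produces $\tau,\tau'\in\Lambda$ such that $a\tau w\tau'$ is an admissible cycle of length $m=n+2N+1$ starting at $a$; the associated $x'\in[a]\cap{\rm Per}_m(\sigma)$ has $\delta_{x'}^m\in\mathcal G$ once $n$ is large, and bounded distortion gives $e^{S_m\phi(x')}\gtrsim e^{-P(2N+1)}\,e^{S_n\phi(x)}$. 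Dividing by $Z_m(\phi,[a]\cap{\rm Per}_m(\sigma))\asymp e^{Pm}$ transfers the lower bound. The argument for $\zeta_{a,y,n}$ is entirely analogous, with the connecting word inserted only at the front so as to preserve the boundary condition $\sigma^n x=y$.

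The main difficulty is keeping Birkhoff sums under control through the insertions of $\tau,\tau'$, since $\phi$ is only measurable and no continuity argument is available. Everything is forced by the Gibbs property: the finiteness of $\Lambda$ limits the insertion cost to a bounded multiplicative factor, while the distortion constant $c_0$ absorbs the error arising from evaluating $S_n\phi$ at two points sharing the same first $n$ symbols after a bounded shift.
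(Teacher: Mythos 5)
Your proposal is correct in substance but follows a genuinely different architecture from the paper. You reduce Theorem C to Theorem B: the upper bound and exponential tightness are transferred through the elementary inequality $\eta_{a,n}(\mathcal K)\le \bigl(Z_n(\phi,{\rm Per}_n(\sigma))/Z_n(\phi,[a]\cap{\rm Per}_n(\sigma))\bigr)\eta_n(\mathcal K)$ together with the fact that both normalizers are $e^{P(\phi)n+o(n)}$, and the lower bound is transferred by an orbit surgery that converts each periodic point (or pre-image) charged by Theorem B into one starting at $a$, via connecting words from $\Lambda$, at a cost bounded by the Gibbs constant and the finiteness of $\Lambda$. The paper does not argue this way: Propositions \ref{tight} and \ref{upper0} prove tightness and the key upper bound for $\eta_{a,n}$, $\zeta_{a,y,n}$ directly (by replacing ${\rm Per}_n(\sigma)$ with $[a]\cap{\rm Per}_n(\sigma)$, etc.), and in the lower bound (Proposition \ref{lowbound}, Steps 2--3) the gluing $a\kappa w\rho$ is applied to the cylinder family $F^n$ of Lemma \ref{katok''} approximating an ergodic measure, after which the \emph{unconditioned} bounds for $\eta_n,\zeta_{y,n}$ are deduced from the conditioned ones --- i.e., the opposite direction of your reduction. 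Both routes hinge on the same two mechanisms: the Proposition \ref{pressure} asymptotics $Z_n(\phi,[a]\cap{\rm Per}_n(\sigma)), Z_n(\phi,[a]\cap\sigma^{-n}y)=e^{P(\phi)n+o(n)}$ (this is exactly where finite primitiveness enters) and the bounded distortion $\sup_n D_n(\phi)\le 2\log c_0$ from \eqref{Gibbs}. Your route is a perfectly legitimate shortcut once Theorem B is granted; the paper's route gets B and C simultaneously at no extra cost.

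One step of your lower-bound transfer needs repair as written. Choosing merely an open $\mathcal G_0\ni\mu$ with $\overline{\mathcal G_0}\subset\mathcal G$ does not justify the claim that $\delta_{x'}^m\in\mathcal G$ for \emph{all} the surgered points once $n$ is large: in the non-compact space $\mathcal M$ there need be no positive distance between $\mathcal G_0$ and $\mathcal M\setminus\mathcal G$, so a perturbation that is merely $o(1)$ uniformly in $x$ has no margin to land in $\mathcal G$. The fix is the one the paper itself uses: work with basic open sets $\mathcal V\{\varphi_j,\alpha_j\}_{j=1,\ldots,l}$ with $\varphi_j\in C_u(X)$ bounded, take $\mathcal G_0=\mathcal V\{\varphi_j,\alpha_j+\delta\}\ni\mu$ with $\mathcal V\{\varphi_j,\alpha_j\}\subset\mathcal G$, and then verify $\bigl|\tfrac1m S_m\varphi_j(x')-\tfrac1n S_n\varphi_j(x)\bigr|\le \tfrac{1}{n}\bigl((2N+1)\sup|\varphi_j|+D_n(\varphi_j)\bigr)+o(1)$, which is uniform in $x$ because $D_n(\varphi_j)=o(n)$ by Lemma \ref{varlem}; the fixed gap $\delta$ then absorbs the error. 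Note also that it is the test functions $\varphi_j$, not only $\phi$, whose Birkhoff sums must be controlled through the insertion; your closing paragraph addresses $\phi$ (correctly, via $c_0$ and the finiteness of $\Lambda\cup\{a\}$, which also gives $\inf_{[a\tau]}S_{N+1}\phi>-\infty$ by the Gibbs property), but the uniform weak*-statement requires the above estimate as well. With that adjustment, and the observation that the surgery map $x\mapsto x'$ is injective so the weights can be summed, your argument goes through.
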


 Known results on large deviations for countable Markov shifts are very much limited.
Under additional assumptions on $(X,\phi)$ and a bounded function $\varphi\colon X\to\mathbb R$,
the following local large deviations for $\{\zeta_{a,y,n}\}$ is a consequence of \cite[Theorem 7.4]{Sar15}:
for any $\epsilon>0$ there exists $c_\varphi(\epsilon)>0$ such that
$$\frac{1}{Z_{n}(\phi,[a]\cap\sigma^{-n}y)}\sum_{\stackrel{x\in[a]\cap\sigma^{-n}y}{|(1/n)S_n\varphi(x)-\int\varphi d\mu_\phi|>\epsilon}}\exp S_n\phi(x)
\leq\exp\left(- c_\varphi(\epsilon)n\right),$$
for all $n$ large enough. Similar exponential 
bounds were obtained in \cite[Theorem 3.5]{Yur05} under other strong combinatorial assumptions on $X$.
These results indeed
provide exponential bounds on small fluctuations near the mean $\int\varphi d\mu_\phi$ (for small $\epsilon$), but
 do not provide enough information for large $\epsilon$,
and do not imply the LDP. The only one result on the LDP
for countable Markov shifts we are currently aware of is due to Denker and Kabluchko 
\cite[Theorem 3.3]{DenKab07}, who showed the level-1 LDP
for Gibbs-Markov maps and for a certain class of bounded observables.
In the context of smooth dynamical systems, local large deviations results
were obtained for non-uniformly hyperbolic systems admitting
inducing schemes with countably infinite number of branches \cite{MelNic08,ReyYou08}.
Although part of arguments in \cite{MelNic08,ReyYou08} may be applicable to our setting,
they will yield only local large deviations results too, not the LDP.


One important consequence of Theorem A is Varadhan's abstraction of Laplace's method.
Denote by $C(X)$ the space of $\mathbb R$-valued bounded continuous functions on $X$
endowed with the supremum norm.
Under the hypotheses of Theorem A,
for each $\varphi\in C(X)$ the limit
$$Q(\varphi)=\lim_{n\to\infty}\frac{1}{n}\log\int \exp{S_n\varphi}d\mu_\phi$$
exists and satisfies
\begin{equation*}
Q(\varphi)=\sup_{\mu\in\mathcal M}\left(\int\varphi d\mu-I(\mu)\right),\end{equation*}
as shown in \cite{Var84}. By convex duality, this implies
$$I(\mu)=\sup_{\varphi\in C(X)}\left(\int\varphi d\mu-Q(\varphi)\right)\quad\text{for every $\mu\in\mathcal M$.}$$
This follows, e.g., from \cite[Lemma 4.5.8]{DemZei98} if we use the natural
embedding of $\mathcal M$ into the topological vector space of signed measures on $X$.

Another important consequence of Theorems A and B is the level-1 LDP.
Let $d\geq1$ be an integer and $\varphi_1,\ldots,\varphi_d\in C(X)$.
By the contraction principle, 
the sequence of 
 distributions of $\mathbb R^d$-valued random variables 
 $x\mapsto ((1/n)S_n\varphi_1,\ldots,(1/n)S_n\varphi_d)$
 satisfies the LDP, with the convex good rate function
 $I_\varphi\colon\mathbb R^d\to[0,\infty]$ given by
  $$I_\varphi(\alpha_1,\ldots,\alpha_d)= 
\inf\left\{I(\mu)\colon\mu\in\mathcal M, \left(\int\varphi_1 d\mu,\ldots,\int\varphi_d d\mu\right)=(\alpha_1,\ldots,\alpha_d)\right\},$$
which is finite
 if and only if $\alpha_j\in\left[\inf_{\mu\in\mathcal M_\phi(\sigma)}\int\varphi_jd\mu,
\sup_{\mu\in\mathcal M_\phi(\sigma)}\int\varphi_jd\mu\right]$ holds for $j=1,\ldots,d$.
The case $d=1$ extends the result of  Denker and Kabluchko \cite[Theorem 3.3]{DenKab07}, in which
the level-1 LDP was shown for a limited class of functions including those which
 depend only on the first finite number of symbols.



We illustrate our results with the regular continued fraction expansion
$$x=\cfrac{1}{a_1(x)+\cfrac{1}{a_2(x)+\cdots}},$$
where $x\in(0,1)\setminus\mathbb Q$ and each digit $a_i(x)$ $(i=1,2,\ldots)$ is a positive integer.
We investigate frequencies with which a given integer $k$ appears in this expansion.
The digits are generated by
iterating the Gauss transformation $T\colon (0,1]\to[0,1)$ given by 
$Tx=1/x-\lfloor1/x\rfloor$, namely
$a_i(x)=k$ if and only if $T^{i-1}x\in(\frac{1}{k+1},\frac{1}{k})$.
Denote by ${\rm Leb}$ the restriction of the Lebesgue measure to $(0,1)$.
 The Gauss measure
$\frac{1}{\log2}\frac{dx}{1+x}$ is 
the unique $T$-invariant Borel probability measure that is absolutely continuous with
respect to {\rm Leb}. 
For each integer $n\geq1$ define a counting function $F_{k,n}\colon(0,1)\setminus\mathbb Q\to\mathbb N$ by
 $$F_{k,n}(x)=\#\{1\leq i\leq n\colon a_i(x)=k\}.$$
Since the Gauss measure is ergodic, Birkhoff's Theorem gives 
$$\frac{1}{n}F_{k,n}\to\frac{1}{\log2}\log\frac{(k+1)^2}{k(k+2)}\ \text{
$(n\to\infty)$ {\rm Leb}-a.e.}$$
Following orbits of $T$ over the Markov partition $\{(\frac{1}{k+1},\frac{1}{k}]\}_{k=1}^\infty$
one can model $T$ by the countable full shift.
Denoted by $\pi\colon S^{\mathbb N}\to(0,1)$ the conjugacy $T\circ\pi=\pi\circ\sigma$.
The Gibbs state for the potential 
$\phi=-\log|DT\circ\pi|$ corresponds to
the Gauss measure.
From \cite[Proposition 3.4]{DenKab07}, the minimizer of the level-1 rate function associated
with the indicator function $\1_{(\frac{1}{k+1},\frac{1}{k})}$ of the interval $(\frac{1}{k+1},\frac{1}{k})$ is unique.
Hence, the corresponding level-1 LDP reads as follows.
For comparison, see \cite[Theorem 3.3, Proposition 3.4]{DenKab07}.

\begin{cor1}\label{apply}
For every integer $k\geq1$ 
and every
$y\in \pi(S^{\mathbb N})$ the following holds:
\begin{itemize} 
\item[(a)] for every $\alpha\in\left(\frac{1}{\log2}\log\frac{(k+1)^2}{k(k+2)},1\right]$,
\begin{align*}
\lim_{n\to\infty}&\frac{1}{n}\log{\rm Leb} \left\{x\in(0,1)\setminus\mathbb Q\colon \frac{1}{n}F_{k,n}(x)\geq\alpha\right\}\\
&= \lim_{n\to\infty}\frac{1}{n}\log 
\sum_{\stackrel{x\in(0,1)\setminus\mathbb Q}{T^nx=x,\ 
(1/n)F_{k,n}(x)\geq\alpha }} 
|DT^n(x)|^{-1}\\
\ &= \lim_{n\to\infty}\frac{1}{n}\log 
\sum_{\stackrel{x\in(0,1)\setminus\mathbb Q}{T^nx=y,\ (1/n)F_{k,n}(x)\geq\alpha }} 
|DT^n(x)|^{-1}\\
\ &=-\inf\left\{I(\mu)\colon\mu\in\mathcal M,\int \text{\rm $\1_{(\frac{1}{k+1},\frac{1}{k})}$}d(\mu\circ\pi^{-1})=\alpha\right\}
\in(-\infty,0);
\end{align*}
\item[(b)] for every $\alpha\in\left[0,\frac{1}{\log2}\log\frac{(k+1)^2}{k(k+2)}\right)$,
 \begin{align*}
\lim_{n\to\infty}&\frac{1}{n}\log{\rm Leb} \left\{x\in(0,1)\setminus\mathbb Q\colon \frac{1}{n}F_{k,n}(x)\leq\alpha\right\}\\
&= \lim_{n\to\infty}\frac{1}{n}\log 
\sum_{\stackrel{x\in(0,1)\setminus\mathbb Q}{T^nx=x,\ 
(1/n)F_{k,n}(x)\leq\alpha }} 
|DT^n(x)|^{-1}\\
\ &= \lim_{n\to\infty}\frac{1}{n}\log 
\sum_{\stackrel{x\in(0,1)\setminus\mathbb Q}{T^nx=y,\ (1/n)F_{k,n}(x)\leq\alpha }} 
|DT^n(x)|^{-1}\\
\ &=-\inf\left\{I(\mu)\colon\mu\in\mathcal M,\int \text{\rm $\1_{(\frac{1}{k+1},\frac{1}{k})}$}d(\mu\circ\pi^{-1})=\alpha\right\}
\in(-\infty,0);
\end{align*}
\item[(c)]
the three limits below exist for all $\beta\in\mathbb R$, differentiable at $\beta=0$ and 
\begin{align*}\frac{1}{\log2}\log\frac{(k+1)^2}{k(k+2)}&=\left.\frac{d}{d\beta}\right|_{\beta=0}
\lim_{n\to\infty}\frac{1}{n}\log\int
\exp(\beta F_{k,n}(x))dx\\
&=\left.\frac{d}{d\beta}\right|_{\beta=0}\lim_{n\to\infty}\frac{1}{n}\log\left(\sum_{\stackrel{x\in (0,1)\setminus\mathbb Q}{T^nx=x}} 
\exp(\beta F_{k,n}(x))|DT^n(x)|^{-1}\right)\\
&=\left.\frac{d}{d\beta}\right|_{\beta=0}\lim_{n\to\infty}\frac{1}{n}\left(\log\sum_{\stackrel{x\in (0,1)\setminus\mathbb Q}{T^nx=y}} 
\exp(\beta F_{k,n}(x))|DT^n(x)|^{-1}\right).
\end{align*}\end{itemize}
\end{cor1}

Since the pressure is $0$, the contributions from the normalizing factors 
 in the formulas in (a) and (b) disappear as $n\to\infty$ 
(see Proposition \ref{pressure}).
Item (c) is a consequence of the general theory on large deviations
 \cite[Theorem II. 6.3]{Ell85}.

The rest of this paper consists of three sections entirely dedicated to proofs of the theorems.
   After a few preliminaries in
  $\S$2 we prove the lower bound \eqref{low} for all open sets in $\S$3,
  and then the upper bound \eqref{up} for all closed sets in $\S$4.
Our argument is a dynamical one as briefly outlined below, 
 inspired by that of Takahashi \cite{Tak84,Tak87}. 
 New ingredients are necessary for handling difficulties arising from 
 the non-compactness of $X$ and $\mathcal M$.

A useful property for a proof of the large deviations lower bound for all open sets
 is {\it the entropy-density of ergodic measures} (see \S \ref{specify} for the definition).
This property permits the reduction of the proof of the lower bound 
to the case where the measure in consideration is ergodic
\cite{EKW94,FolOre88,You90}. 
The entropy-density in our setting was shown in \cite[Main Theorem]{Tak}, and
its slight variant taking the unboundedness of the potential $\phi$ into consideration 
(see Lemma \ref{specification}) suffices to perform this reduction.
Estimates for ergodic measures are carried out by combining the Gibbs property
and an approximation of ergodic measures with a finite number
 of cylinders (separated sets), which is well-known for compact metric spaces \cite{EKW94}
  and is still valid in our setting of infinite alphabet
 (see Lemma \ref{katok''}).
    
 

The exponential tightness allows us to reduce 
the proof of the upper bound for all closed sets
to that for all compact sets.
To show this property,
we modify a portion of a proof of Sanov's theorem 
on the LDP for the distribution 
of empirical means associated with i.i.d. random variables.
The lack of independence in our setting is compensated by
a bounded distortion property of the Gibbs state (see Lemma \ref{distor}).
The finite irreducibility is used in a crucial way to treat all compact sets.
We construct a finite number of finite subsystems (finite full shifts) and
invariant probability measures on each,
and use them altogether to deduce the desired upper bound.

\section{Preliminaries}
For the rest of this paper we assume $S=\mathbb N$ for simplicity,
and $X$ always denotes a countable Markov shift.
In this section we collect and prove a few preliminary results which will be frequently used later.

\subsection{Mild distortions}\label{weaktop}

The topology on $X$ is metrizable by a metric
 $d(x,y)=\exp\left(-\inf\{i\in\mathbb N\colon x_i\neq y_i\}\right)$
 with the convention $\exp(-\infty)=0$.
Denote by $C_u(X)$ the set of uniformly continuous elements of $C(X)$.
For a function $\varphi\colon X\to\mathbb R$ and an integer $n\geq1$ define
  $$D_n(\varphi)=\sup_{w\in E^n}\sup_{x,y\in[w]}S_{n}\varphi(x)-S_{n}\varphi(y).$$
  Notice that $D_n(\varphi)\leq D_1(\varphi)n$ holds for every $n\geq1$.
  The regularity of functions needed in most of our argument is $D_n(\varphi)=o(n)$,
  which is satisfied for elements of $C^u(X)$.

\begin{lemma}\label{varlem}{\rm (\cite[Proposition 6.2(b)]{FieFieYur02}).}
If $\varphi\in C_u(X)$, then
$D_n(\varphi)=o(n)$ $(n\to\infty)$.
\end{lemma}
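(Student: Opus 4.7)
The plan is to exploit uniform continuity to make the per-symbol contribution to $S_n\varphi(x)-S_n\varphi(y)$ small for all but a bounded number of indices, and to control the remaining indices using the boundedness of $\varphi$ built into the definition of $C(X)$.

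The starting observation is a purely combinatorial one about the metric $d$ on $X$: if $w\in E^n$ and $x,y\in[w]$, then $x_k=y_k$ for $k=0,\ldots,n-1$, so for $0\le i\le n-1$ the points $\sigma^i x$ and $\sigma^i y$ agree in their first $n-i$ coordinates. By the definition of $d$, this gives $d(\sigma^i x,\sigma^i y)\le e^{-(n-i)}$. Thus indices $i$ far from $n$ produce pairs that are very close in $X$.

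Fix $\epsilon>0$. By uniform continuity of $\varphi$ choose $\delta>0$ with $d(u,v)<\delta\Rightarrow|\varphi(u)-\varphi(v)|<\epsilon$, and pick an integer $N$ with $e^{-N}<\delta$; crucially, $N$ depends only on $\varphi$ and $\epsilon$, not on $n$, $w$, $x$, $y$. For $0\le i\le n-N-1$ we then have $d(\sigma^i x,\sigma^i y)<\delta$ and hence $|\varphi(\sigma^i x)-\varphi(\sigma^i y)|<\epsilon$. For the remaining $N$ indices $n-N\le i\le n-1$, we use the crude bound $|\varphi(\sigma^i x)-\varphi(\sigma^i y)|\le 2\|\varphi\|_\infty$, which is finite because $\varphi\in C_u(X)\subset C(X)$. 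Summing gives
\[
S_n\varphi(x)-S_n\varphi(y)\le(n-N)\epsilon+2N\|\varphi\|_\infty,
\]
so taking the supremum over $w$ and $x,y\in[w]$ yields $D_n(\varphi)\le(n-N)\epsilon+2N\|\varphi\|_\infty$, and dividing by $n$ gives $\limsup_{n\to\infty}D_n(\varphi)/n\le\epsilon$. Since $\epsilon>0$ was arbitrary, $D_n(\varphi)=o(n)$.

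There is no real obstacle here: the argument is a direct unpacking of uniform continuity against the explicit form of the metric $d$. The only thing to notice is the implicit use of $\|\varphi\|_\infty<\infty$, which is available because $C(X)$ is defined in the paper as bounded continuous functions; without boundedness the last $N$ terms could not be controlled uniformly.
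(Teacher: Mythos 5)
Your argument is correct: the estimate $d(\sigma^i x,\sigma^i y)\le e^{-(n-i)}$ for $x,y$ in a common $n$-cylinder, combined with uniform continuity for the first $n-N$ indices and the bound $2\|\varphi\|_\infty$ (available since $C_u(X)\subset C(X)$ consists of bounded functions) for the last $N$ indices, gives $D_n(\varphi)\le (n-N)\epsilon+2N\|\varphi\|_\infty$ and hence $D_n(\varphi)=o(n)$. The paper does not reproduce a proof but simply cites \cite[Proposition 6.2(b)]{FieFieYur02}, and your argument is exactly the standard one behind that reference (equivalently phrased via the variations $\sup\{|\varphi(x)-\varphi(y)|\colon x_i=y_i,\ i<k\}$ tending to zero and a Ces\`aro average), so nothing further is needed.
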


Each $\varphi\in C(X)$ defines a functional $\mu\in\mathcal M\mapsto\int\varphi d\mu$.
The weak*-topology is
the coarsest topology on $\mathcal M$ 
which makes every functional $\varphi(\cdot)$, $\varphi\in C(X)$  continuous. 
As $X$ is a Polish space,
the weak*-topology is metrizable and 
 $\mathcal M$ becomes a Polish space.
The weak*-topology coincides with
 the coarsest topology which makes every $\varphi(\cdot)$, $\varphi\in C_u(X)$  continuous (see e.g., \cite[Chapter 9]{Str11}).

\subsection{Properties of Gibbs states}\label{fund}
The existence of a Gibbs state imposes strong restrictions
 on the corresponding potential. 
   \begin{lemma}\label{impose}
Let $\phi\colon X\to\mathbb R$ be a measurable function and 
assume there exists a Gibbs state for the potential $\phi$.
 Then $\sup\phi<\infty$,  $\inf\phi=-\infty$ and
  $\sup_{n\geq1}D_n(\phi)<\infty.$ 
 \end{lemma}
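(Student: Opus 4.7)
The plan is to extract all three conclusions directly from the two-sided Gibbs inequality \eqref{Gibbs}, using only that $\mu_\phi$ is a probability measure on $X$ and that $S=\mathbb N$ is infinite.

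First I would handle the distortion bound. Fix $n\geq1$, $w\in E^n$ and two points $x,y\in[w]$. Applying \eqref{Gibbs} to both $x$ and $y$ (using the same cylinder $[w]=[x_0,\dots,x_{n-1}]=[y_0,\dots,y_{n-1}]$) and dividing the two two-sided inequalities gives
\[
c_0^{-2}\le\exp\bigl(S_n\phi(x)-S_n\phi(y)\bigr)\le c_0^{2}.
\]
Taking the supremum over $w\in E^n$ and $x,y\in[w]$ yields $D_n(\phi)\le 2\log c_0$, independent of $n$, which establishes $\sup_{n\ge 1}D_n(\phi)<\infty$.

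Next I would prove $\sup\phi<\infty$. For any $x\in X$, applying \eqref{Gibbs} with $n=1$ gives
\[
\mu_\phi[x_0]\ge c_0^{-1}\exp\bigl(-P+\phi(x)\bigr),
\]
hence $\phi(x)\le P+\log c_0+\log\mu_\phi[x_0]\le P+\log c_0$, since $\mu_\phi[x_0]\le1$. This uniform bound gives $\sup\phi\le P+\log c_0<\infty$.

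For $\inf\phi=-\infty$ I would use that $S=\mathbb N$ is infinite. Since $\{[a]\}_{a\in S}$ is a countable measurable partition of $X$ with $\sum_{a\in S}\mu_\phi[a]=1$, the terms $\mu_\phi[a]$ must tend to $0$ along some subsequence $a_k\to\infty$ in $S$. Because $A$ has no zero rows, every $[a]$ is non-empty, so for each $k$ I can choose $x^{(k)}\in[a_k]$. Applying the same $n=1$ lower Gibbs bound as above,
\[
\phi\bigl(x^{(k)}\bigr)\le P+\log c_0+\log\mu_\phi[a_k]\longrightarrow-\infty,
\]
so $\inf\phi=-\infty$. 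The only genuine input beyond \eqref{Gibbs} is that $S$ is infinite, which is essential: on a finite alphabet one would only obtain $\inf\phi>-\infty$. There is no real obstacle; the argument is a direct accounting of the two sides of the Gibbs inequality, and I would present the three conclusions in the order distortion $\Rightarrow$ $\sup\phi<\infty$ $\Rightarrow$ $\inf\phi=-\infty$.
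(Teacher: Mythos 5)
Your argument is correct and is exactly what the paper means by its one-line proof ``Immediate from \eqref{Gibbs}'': the distortion bound $D_n(\phi)\le 2\log c_0$ and the bound $\sup\phi\le P+\log c_0$ come from combining the two sides of the Gibbs inequality on a common cylinder, and $\inf\phi=-\infty$ follows from $\mu_\phi[a]\to0$ over the infinite alphabet together with the $n=1$ lower Gibbs bound. No gaps; your write-up simply makes the ``immediate'' steps explicit.
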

  \begin{proof}
 Immediate from \eqref{Gibbs}.
  \end{proof}

To compensate the lack of independence of the random variables in question,
we use the next ``bounded distortion property'' of Gibbs states.
\begin{lemma}\label{distor}
Let $\phi\colon X\to\mathbb R$ be a measurable function and
 $\mu_\phi$ a Gibbs state for the potential $\phi$ as in \eqref{Gibbs}.
Then the following holds:

\begin{itemize}
\item[(a)] for all $v$, $w\in E^*$ with
$vw\in E^*$,
$$c_0^{-3}\mu_\phi[w]\leq\frac{\mu_\phi[vw]}{\mu_\phi[v]}\leq c_0^3\mu_\phi[w].$$
\item[(b)] for all $v$, $w_i\in E^*$ $(i\in\mathbb N)$ with
$vw_i\in E^*$ for every $i\in\mathbb N$, 
$$c_0^{-3}
\sum_{i\in\mathbb N}\mu_\phi [w_i]\leq\frac{\sum_{i\in\mathbb N}\mu_\phi[ vw_i]}{\mu_\phi[v]}\leq c_0^3
\sum_{i\in\mathbb N} \mu_\phi[w_i].$$
\end{itemize}
\end{lemma}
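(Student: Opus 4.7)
The plan is to deduce both statements directly from the Gibbs inequality \eqref{Gibbs} together with the elementary cocycle identity $S_{m+n}\phi(x)=S_m\phi(x)+S_n\phi(\sigma^m x)$. No deeper mechanism is needed: part (a) is essentially a bookkeeping of exponential factors, and part (b) follows from (a) by termwise summation. The Gibbs inequality itself is the only nontrivial input, so I do not expect a genuine obstacle; the main point is simply to select a single test point that simultaneously witnesses the Gibbs bounds for all three cylinders involved.

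For part (a), set $m=|v|$ and $n=|w|$. Since the transition matrix $A$ has no zero row or column, every admissible string extends to a point of $X$, so I may choose $x\in[vw]$. Then automatically $x\in[v]$ and $\sigma^m x\in[w]$, and \eqref{Gibbs} applied at the points $x$, $x$, and $\sigma^m x$ to the cylinders $[vw]$, $[v]$, $[w]$ respectively gives three two-sided bounds of the form constant $\times\exp(-Pk+S_k\phi(\cdot))$. Dividing the upper bound for $\mu_\phi[vw]$ by the lower bound for $\mu_\phi[v]$, the cocycle identity cancels all terms involving $S_m\phi(x)$ and $Pm$, leaving a factor $c_0^2\exp(-Pn+S_n\phi(\sigma^m x))$, which in turn is bounded above by $c_0^3\mu_\phi[w]$ via the upper Gibbs bound on $[w]$. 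The lower estimate is obtained symmetrically, interchanging the roles of the upper and lower Gibbs bounds. The degenerate case $|v|=0$ or $|w|=0$ (using the conventions of the paper that $E^0=\emptyset$ and $vw=v=wv$ when one factor is empty) reduces to a trivial identity $\mu_\phi[w]=\mu_\phi[w]$ and so causes no difficulty.

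For part (b), since each inequality of (a) is a two-sided numerical inequality between non-negative quantities, I may sum over $i\in\mathbb N$. Using the identity
\[
\sum_{i\in\mathbb N}\frac{\mu_\phi[vw_i]}{\mu_\phi[v]}=\frac{1}{\mu_\phi[v]}\sum_{i\in\mathbb N}\mu_\phi[vw_i],
\]
which is valid in $[0,\infty]$ as $\mu_\phi[v]>0$ by \eqref{Gibbs}, the sandwich in (b) follows at once. Observe that no distinctness, disjointness, or equal-length hypothesis on the family $\{w_i\}_{i\in\mathbb N}$ is needed, since the conclusion is stated at the level of (possibly overlapping) cylinder measures rather than at the level of a disjoint union.
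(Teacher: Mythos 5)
Your proof is correct and follows essentially the same route as the paper: the Gibbs inequality \eqref{Gibbs} combined with the cocycle identity $S_{|vw|}\phi=S_{|v|}\phi+(S_{|w|}\phi)\circ\sigma^{|v|}$ on $[vw]$ and the inclusion $\sigma^{|v|}[vw]\subset[w]$, the only cosmetic difference being that you evaluate at a single point $x\in[vw]$ while the paper takes suprema and infima over the cylinders. Part (b) by termwise summation of (a) matches the paper as well.
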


\begin{proof}
Let $v$, $w\in E^*$ with
$vw\in E^*$.
From \eqref{Gibbs} the following holds:
$$ c_0^{-1}\exp\left(-P|vw|+\sup_{[vw]}S_{|vw|}\phi\right)\leq\mu_\phi[vw]\leq c_0\exp\left(-P|vw|+\inf_{[vw]}S_{|vw|}\phi\right);$$
\begin{equation*}
c_0^{-1}\exp\left(-P|v|+\sup S_{|v|}\phi\right)\leq\mu_\phi[v]\leq c_0\exp\left(-P|v|
+\inf S_{|v|}\phi\right).\end{equation*}
In addition, 
$S_{|vw|}\phi-S_{|v|}\phi=(S_{|w|}\phi)\circ \sigma^{|v|}$ holds on $[vw]$.
 Since $\sigma^{|v|}[vw]\subset [w]$ we obtain
$$ 
\frac{\mu_\phi[vw]}{\mu_\phi[v]}\geq c_0^{-2}\exp\left(-P|w|+\inf_{[w]}S_{|w|}\phi\right)\geq c_0^{-3}\mu_\phi[w]$$
and
$$\frac{\mu_\phi[vw]}{\mu_\phi[v]}\leq
c_0^2\exp\left(-P|w|+\sup_{[w]} S_{|w|}\phi\right)\leq c_0^3\mu_\phi[w].$$
Item (b) is a consequence of (a) and the countable additivity of a measure.
 \end{proof}

\subsection{Expressions of pressure}\label{express}
  Given a measurable function $\phi\colon X\to\mathbb R$ 
  define its {\it pressure} 
$$P(\phi)=\lim_{n\to\infty}\frac{1}{n}\log\sum_{w\in E^n} 
\sup_{[w]}\exp  S_n\phi.$$
As the sequence $n\mapsto \log\sum_{w\in E^n} 
\sup_{[w]}\exp  S_n\phi$ is sub-additive, this limit exists. 
If $\mu_\phi$ is a Gibbs state for the potential $\phi$,
the constant $P$ in \eqref{Gibbs}
is equal to $P(\phi)$, see \cite[Proposition 2.2(a)]{MauUrb01}.

\begin{prop}\label{pressure}
Let $X$ be finitely primitive,
 $\phi\colon X\to\mathbb R$ a measurable function and $\mu_\phi$ a Gibbs state for the potential $\phi$.
Then for every $a\in \mathbb N$ and $y\in X$,
  \begin{align*}
  P(\phi)&= \lim_{n\to\infty}\frac{1}{n}\log Z_n(\phi,[a]\cap{\rm Per}_n(\sigma))=\lim_{n\to\infty}\frac{1}{n}\log Z_n(\phi,{\rm Per}_n(\sigma))\\
  &=\lim_{n\to\infty}\frac{1}{n}\log Z_n(\phi,[a]\cap\sigma^{-n}y)=\lim_{n\to\infty}\frac{1}{n}\log Z_n(\phi,\sigma^{-n}y).\end{align*}
\end{prop}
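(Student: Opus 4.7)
The plan is to sandwich each of the four $\tfrac{1}{n}\log Z_n$ between matching upper and lower bounds that converge to $P(\phi)$. The upper bounds are straightforward: for any $w\in E^n$, the cylinder $[w]$ meets ${\rm Per}_n(\sigma)$ in at most one point (the periodic extension of $w$, which is admissible iff $A_{w_{n-1}w_0}=1$) and meets $\sigma^{-n}y$ in at most one point (the concatenation $w_0\cdots w_{n-1}y_0y_1\cdots$, admissible iff $A_{w_{n-1}y_0}=1$). Combined with the uniform cylinder oscillation $D_n(\phi)\le 2\log c_0$ coming from \eqref{Gibbs} and Lemma \ref{impose}, each of the four $Z_n$ is dominated by $c_0^2\sum_{w\in E^n}\sup_{[w]}\exp S_n\phi$, so that the definition of $P(\phi)$ immediately gives $\limsup_{n\to\infty}\tfrac{1}{n}\log Z_n\le P(\phi)$.

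For the lower bounds, the key consequence of \eqref{Gibbs} is the Gibbs comparison, valid for every $a\in\mathbb{N}$ and every $n\ge 1$,
\[
\sum_{w\in E^n(a)}\sup_{[w]}\exp S_n\phi \;\asymp\; \mu_\phi[a]\,e^{nP(\phi)},
\]
obtained by summing \eqref{Gibbs} over $w\in E^n(a)$, using $\bigcup_{w\in E^n(a)}[w]=[a]$, and recalling from Section \ref{express} that the constant $P$ in \eqref{Gibbs} equals $P(\phi)$. Finite primitivity is then used to turn words in $E^m(a)$ into periodic points or pre-images: given $u\in E^m(a)$, pick $\lambda(u)\in\Lambda$ with $u_{m-1}\lambda(u)a\in E^*$ in the periodic case, respectively $u_{m-1}\lambda(u)y_0\in E^*$ in the pre-image case. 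The periodic extension of $u\lambda(u)$ is a point $x_u\in[a]\cap[u]\cap{\rm Per}_{m+N}(\sigma)$, while $x_u=u_0\cdots u_{m-1}\lambda(u)_0\cdots\lambda(u)_{N-1}y_0y_1\cdots$ lies in $[a]\cap[u]\cap\sigma^{-(m+N)}y$; distinct $u$ produce distinct $x_u$. Decomposing $S_{m+N}\phi(x_u)=S_m\phi(x_u)+S_N\phi(\sigma^m x_u)$ and applying the cylindrical oscillation bound twice yields a constant $C>0$, independent of $u$, $m$ and $y$, such that $\exp S_{m+N}\phi(x_u)\ge C\sup_{[u]}\exp S_m\phi$. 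Summing over $u\in E^m(a)$ and invoking the Gibbs comparison gives
\[
Z_{m+N}(\phi,[a]\cap\,\cdot\,) \;\ge\; C'\,\mu_\phi[a]\,e^{mP(\phi)},
\]
where $\cdot$ stands for either ${\rm Per}_{m+N}(\sigma)$ or $\sigma^{-(m+N)}y$. Setting $n=m+N$ and letting $n\to\infty$ gives $\liminf_{n\to\infty}\tfrac{1}{n}\log Z_n\ge P(\phi)$ for the $[a]$-restricted quantities; the unrestricted quantities dominate their $[a]$-restricted counterparts and satisfy the same upper bound, so all four limits exist and equal $P(\phi)$.

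The delicate step is producing the uniform constant $C$, i.e.\ a lower bound on the extra Birkhoff sum $S_N\phi(\sigma^m x_u)$ that is independent of $u$, $m$, and (in the pre-image case) $y$. Since Lemma \ref{impose} allows $\inf\phi=-\infty$, no pointwise bound on $\phi$ is available; however, $\sigma^m x_u\in[\lambda(u)]$, the oscillation $D_N(\phi)\le 2\log c_0$ is uniform in the tail, and $\Lambda$ is a finite set. Fixing a reference point $p_\lambda\in[\lambda]$ for each $\lambda\in\Lambda$, one therefore has
\[
S_N\phi(\sigma^m x_u) \;\ge\; \min_{\lambda\in\Lambda} S_N\phi(p_\lambda) - 2\log c_0,
\]
a finite real number depending only on $(c_0,\Lambda,\phi)$. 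This is the only place in the argument where the finiteness of $\Lambda$, i.e.\ finite primitivity as opposed to mere topological mixing, is essential: without it the extra $N$ summands introduced by the gluing word could in principle destroy the desired exponential lower bound.
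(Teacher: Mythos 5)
Your proof is correct, and it reaches the sandwich $P(\phi)\le\liminf\le\limsup\le P(\phi)$ by a somewhat different route than the paper. The paper's mechanism is measure-theoretic: it first proves the mixing-type estimate of Lemma \ref{transition}, $\mu_\phi([a]\cap\sigma^{-n}[b])\ge c\,\mu_\phi[a]\mu_\phi[b]$, by splicing a connector $\kappa\in\Lambda$ after each $w\in E^{n-N}(a)$ and invoking the bounded distortion Lemma \ref{distor}; the lower bounds then follow by converting $\mu_\phi([a]\cap\sigma^{-n}[a])$ (resp.\ $\mu_\phi([a]\cap\sigma^{-n}[y_0])$) into $Z_n$ via \eqref{Gibbs}, and the upper bounds follow from \eqref{Gibbs} together with $\sum_{a}\mu_\phi[E^n(a)]=1$. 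You bypass Lemmas \ref{transition} and \ref{distor} entirely and work at the level of words: your upper bound uses that each $n$-cylinder carries at most one $n$-periodic point or $n$-preimage together with the defining partition sum of $P(\phi)$, and your lower bound glues each $u\in E^m(a)$ to a connector $\lambda(u)\in\Lambda$, controlling the extra Birkhoff sum over the connector by the uniform estimate $D_n(\phi)\le 2\log c_0$ and the finiteness of $\Lambda$ --- which is precisely the role played by $\inf_{\lambda\in\Lambda}\mu_\phi[\lambda]$ in the paper's Lemma \ref{transition}. Both arguments lean on the cited identification of the Gibbs constant $P$ in \eqref{Gibbs} with $P(\phi)$, so that reliance is legitimate. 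Two harmless points you should state explicitly: $\mu_\phi[a]>0$ (immediate from \eqref{Gibbs} and $[a]\neq\emptyset$), so the logarithm of your lower bound $C'\mu_\phi[a]e^{mP(\phi)}$ is finite; and when $\Lambda=\emptyset$ (full shift, $N=0$) your reference-point minimum $\min_{\lambda\in\Lambda}S_N\phi(p_\lambda)$ is taken over the empty set, but then there is no connector block at all and the correction term simply disappears, which is the same ``obvious modification'' the paper invokes for that case.
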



\begin{proof}
Let $\Lambda\subset E^*$ be the finite set and $N\geq0$ the integer given by the finite primitiveness of $X$.
Recall that $N=0$ if and only if $\Lambda=\emptyset$.
\begin{lemma}\label{transition}
There exists $c>0$ such that
for every $(a,b)\in \mathbb N^2$ and every integer $n>N$,
$$\mu_\phi([a]\cap\sigma^{-n}[b])\geq c\mu_\phi[a]\mu_\phi[b].$$
\end{lemma}

\begin{proof}
Put $c=c_0^{-6}\inf_{\lambda\in\Lambda}\mu_\phi[\lambda]$ if $\Lambda\neq\emptyset$ and $c=c_0^{-6}$ if $\Lambda=\emptyset$.
In the case $\Lambda\neq\emptyset$, for each $w\in  E^{n-N}(a)$ 
fix $\kappa=\kappa(w)\in\Lambda$ with $w\kappa b\in E^{n+1}$.
 Lemma \ref{distor}(a) gives
\begin{equation*}
\frac{\mu_\phi [w\kappa b]}{\mu_\phi [w]}\geq c_0^{-3}\mu_\phi[\kappa b]
\geq  c_0^{-6}\mu_\phi[\kappa]\mu_\phi[b]\geq c\mu_\phi[b].\end{equation*}
Rearranging this inequality and summing the result over all $w \in E^{n-N}(a)$ yields
$$\mu_\phi([a]\cap\sigma^{-n}[b])\geq\sum_{  w\in E^{n-N}(a)}
\mu_\phi[w\kappa b]\geq c\sum_{  w\in E^{n-N}(a)}
\mu_\phi[w]\mu_\phi[b]=c\mu_\phi[a]\mu_\phi[b],$$
as required. A proof for the case $\Lambda=\emptyset$ follows from the obvious modification.
\end{proof}
Returning to the proof of Proposition \ref{pressure},
for every $a\in \mathbb N$ and $n>N$ we have
\begin{align*}
c_0e^{-P(\phi)n}Z_n(\phi,[a]\cap{\rm Per}_n(\sigma))&\geq \mu_\phi([a]\cap\sigma^{-n}[a])
\quad\text{by \eqref{Gibbs}}\\
&\geq c\mu_\phi[a]^2\quad\text{by Lemma \ref{transition}}.\end{align*}
 Hence
 \begin{equation}\label{un}P(\phi)\leq\liminf_{n\to\infty}\frac{1}{n}\log Z_n(\phi,[a]\cap{\rm Per}_n(\sigma)).\end{equation}
On the other hand, \eqref{Gibbs} also implies
\begin{equation*}
c_0^{-1}e^{-P(\phi)n}Z_n(\phi,[a]\cap{\rm Per}_n(\sigma))\leq \mu_\phi[E^n(a)].\end{equation*}
Summing this inequality over all $a\in \mathbb N$ gives
$$c_0^{-1}e^{-P(\phi)n}Z_n(\phi,{\rm Per}_n(\sigma))\leq \sum_{a\in \mathbb N}
\mu_\phi[E^n(a)]=1,$$
and therefore
\begin{equation}\label{dois}\limsup_{n\to\infty}\frac{1}{n}\log Z_n(\phi,{\rm Per}_n(\sigma))\leq P(\phi).\end{equation}
The inequalities \eqref{un} and \eqref{dois} imply the two equalities in the first line in Proposition \ref{pressure}.

Let $y=(y_i)_{i\in \mathbb N}\in X$. 
For every $a\in \mathbb N$ and $n>N$,
\begin{align*}
c_0e^{-P(\phi)n}Z_n(\phi,[a]\cap\sigma^{-n}y)&\geq 
\mu_\phi([a]\cap\sigma^{-n}[y_0])\quad\text{by \eqref{Gibbs}}\\
&\geq c\mu_\phi[a]\mu_\phi[y_0]\quad\text{by Lemma \ref{transition}}.\end{align*}
Hence
\begin{equation}\label{tres}P(\phi)\leq\liminf_{n\to\infty}\frac{1}{n}\log Z_n(\phi,[a]\cap\sigma^{-n}y).\end{equation}
On the other hand, \eqref{Gibbs} also implies
\begin{equation*}
c_0^{-1}e^{-P(\phi)n}Z_n(\phi,[a]\cap\sigma^{-n}y)\leq 
\mu_\phi[E^n(a)].\end{equation*}
Summing this inequality over all $a\in \mathbb N$ gives
\begin{equation*}
c_0^{-1}e^{-P(\phi)n}Z_n(\phi,\sigma^{-n}y)\leq 1,\end{equation*}
and therefore
\begin{equation}\label{quatro}\limsup_{n\to\infty}\frac{1}{n}\log Z_n(\phi,\sigma^{-n}y)\leq P(\phi).\end{equation}
The inequalities \eqref{tres} and \eqref{quatro} imply the two equalities in the second line in Proposition \ref{pressure}.
\end{proof}


 \section{Large deviations lower bound}
 This section is devoted to the proof of the lower bound \eqref{low} for all
 open sets. 
 In $\S$\ref{intermediate} we prove a lemma which approximates each ergodic measure
 with finite entropy with a finite collection of cylinders.
 In $\S$\ref{specify} we show that the proof of the lower bound can be reduced to the case where the invariant measure in question is ergodic.
  In $\S$\ref{keyest} we prove a key lower bound,
  and from it deduce the desired one in $\S$\ref{endrow}.

\subsection{Approximation of ergodic measures}\label{intermediate}


  The next lemma approximates 
ergodic measures with a finite collection of cylinders
in a particular sense. 
\begin{lemma}\label{katok''}
Let 
$l\geq1$ be an integer and let $\varphi_j\colon X\to\mathbb R$ 
satisfy $\sup\varphi_j<\infty$ and
$D_n(\varphi_j)=o(n)$ for $j=1,\ldots,l$. 
For any $\sigma$-invariant ergodic measure $\mu\in\mathcal M$ with $h(\mu)<\infty$,
$\int\varphi_jd\mu>-\infty$ for $j=1,\ldots,l$
and any $\epsilon>0$ there exist $n_0\geq1$
such that for every integer $n\geq n_0 $
there exists a finite subset $F^{n}$ of $E^{n}$ for which the following holds:
\begin{equation}\tag{a}
\left|\frac{1}{n}\log\#F^n- h(\mu)\right|\leq\epsilon;\end{equation}
\begin{equation}\tag{b}\sup_{[F^n]}\left|\frac{1}{n}S_{n}\varphi_j-\int\varphi_j d\mu\right|\leq\epsilon\quad
\text{for $j=1,\ldots,l.$}\end{equation}
\end{lemma}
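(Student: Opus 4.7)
The plan is to combine Birkhoff's ergodic theorem, the Shannon--McMillan--Breiman (SMB) theorem, and Egorov's theorem, using the regularity $D_n(\varphi_j)=o(n)$ to promote pointwise Birkhoff estimates at single orbits into uniform estimates on full cylinders. Since $\sup\varphi_j<\infty$ and $\int\varphi_j d\mu>-\infty$ ensure $\varphi_j\in L^1(\mu)$, Birkhoff gives $\tfrac{1}{n}S_n\varphi_j(x)\to\int\varphi_j d\mu$ for $\mu$-a.e.\ $x$; since the $1$-cylinders form a generating partition and $h(\mu)<\infty$, SMB gives $-\tfrac{1}{n}\log\mu[x_0,\ldots,x_{n-1}]\to h(\mu)$ for $\mu$-a.e.\ $x$. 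Applying Egorov then yields a Borel set $A\subset X$ with $\mu(A)\geq 1/2$ on which both convergences are uniform.

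With $A$ in hand, I would take $n_0$ so large that for every $n\geq n_0$ and every $x\in A$,
\[
e^{-n(h(\mu)+\epsilon/2)}\leq\mu[x_0,\ldots,x_{n-1}]\leq e^{-n(h(\mu)-\epsilon/2)},\qquad
\bigl|\tfrac{1}{n}S_n\varphi_j(x)-\textstyle\int\varphi_j d\mu\bigr|\leq\epsilon/2,
\]
and also $D_n(\varphi_j)\leq n\epsilon/2$ for $j=1,\ldots,l$. Define
\[
F^n=\{\,w\in E^n:[w]\cap A\neq\emptyset\,\}.
\]
For the upper count in (a), disjointness of $\{[w]\}_{w\in E^n}$ forces $\sum_{w\in F^n}\mu[w]\leq 1$, which against the lower measure estimate on each $w\in F^n$ gives $\#F^n\leq e^{n(h(\mu)+\epsilon/2)}$. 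For the lower count, $\mu(A)\leq\sum_{w\in F^n}\mu[w]\leq\#F^n\cdot e^{-n(h(\mu)-\epsilon/2)}$ gives $\#F^n\geq\mu(A)\,e^{n(h(\mu)-\epsilon/2)}$; enlarging $n_0$ further absorbs $\tfrac{1}{n}\log\mu(A)$ into $\epsilon/2$.

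For (b), given $w\in F^n$ and $y\in[w]$, choose $x\in[w]\cap A$. Then the regularity estimate gives $|S_n\varphi_j(y)-S_n\varphi_j(x)|\leq D_n(\varphi_j)\leq n\epsilon/2$, and combined with $|\tfrac{1}{n}S_n\varphi_j(x)-\int\varphi_j d\mu|\leq\epsilon/2$ this yields the bound in (b). The one step I expect to require care is invoking SMB for a countable generating partition assuming only $h(\mu)<\infty$ rather than the stronger $H_\mu(\alpha)<\infty$; however, since $\mu$ is shift-invariant and ergodic on a countable Markov shift and the $1$-cylinder partition is Markov, this is standard, and is essentially the same approximation carried out in \cite{Tak}.
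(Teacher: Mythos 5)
Your proof is correct and follows essentially the same route as the paper: SMB plus Birkhoff to identify cylinders with the right measure and ergodic averages, a cylinder-counting argument against total mass $1$ and against $\mu(A)\geq 1/2$ for (a), and $D_n(\varphi_j)=o(n)$ to upgrade pointwise control to uniform control on cylinders for (b). The only cosmetic difference is that the paper skips Egorov, defining $F^n$ directly by the SMB/Birkhoff conditions at scale $n$ and using convergence in measure to get $\mu[F^n]\geq 1/2$ for large $n$.
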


\begin{proof}
Let $\epsilon>0$.
For each integer $n\geq1$ denote by $F^n$ the set of $w\in E^n$ 
for which the following holds:
\begin{equation}\label{cond1}
\exp\left(-\left(h(\mu)+\epsilon\right)n\right)\leq\mu[w]\leq \exp\left(-\left(h(\mu)-\frac{\epsilon}{2}\right)n\right);
\end{equation}
 \begin{equation}\label{cond2}
\inf_{[w]}\left|\frac{1}{n}S_{ n}\varphi_j-\int\varphi_j d\mu\right|\leq\frac{\epsilon}{2}\quad\text{for $j=1,\ldots,l$.}
\end{equation}
Since $h(\mu)<\infty$ and the partition $\{[k]\colon k\in\mathbb N\}$ is a generator,
Shannon-McMillan-Breiman's Theorem 
and Birkhoff's Theorem together imply 
$\mu[F^n]\to1$ as $n\to\infty$. For $n$ large enough so that $\mu[F^n]\geq1/2$, \eqref{cond1} implies
$$\frac{1}{2}\exp\left(\left(h(\mu)-\frac{\epsilon}{2}\right)n\right)\leq\#F^n\leq\exp\left((h(\mu)+\epsilon)n\right),$$
which yields (a).
Item (b) follows from \eqref{cond2} provided $n$ is large enough so that
$(1/n)D_{n}(\varphi_j)
\leq\epsilon/2$ holds for $j=1,\ldots,l$.
\end{proof}

\subsection{Reduction to ergodic measures}\label{specify}

We say $X$ is {\it transitive} if for any $a,b\in\mathbb N$ there exists an integer $n\geq1$
such that $[a]\cap\sigma^{-n}[b]\neq\emptyset$. Clearly, the finite irreducibility implies the transitivity.
 For transitive countable Markov shifts,
ergodic measures are {\it entropy-dense} 
\cite[Main Theorem]{Tak}:
for any non-ergodic $\mu$ and $\epsilon>0$ there exists an ergodic $\nu$ which satisfies
$h(\nu)>h(\mu)-\epsilon$.
The proof of \cite[Main Theorem]{Tak} works verbatim to show the next lemma (a proof omitted), which
 permits us to exclude from further consideration non-ergodic measures
  in proving the lower bound \eqref{low}.

\begin{lemma}\label{specification}
Let $X$ be transitive 
and $\phi\colon X\to\mathbb R$ a measurable function with $\sup\phi<\infty$
and $\sup_{n\geq1} D_n(\phi)<\infty$.
For any $\sigma$-invariant measure 
$\mu\in\mathcal M$ with finite entropy
 there exists a sequence $\{\mu_k\}$ of ergodic measures in $\mathcal M_\phi(\sigma)$
such that $\mu_k\to\mu$ in the weak*-topology, $h(\mu_k)\to h(\mu)$
and $\int\phi d\mu_k\to\int \phi d\mu$.
\end{lemma}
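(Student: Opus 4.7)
The plan is to run the proof of \cite[Main Theorem]{Tak} while additionally tracking the observable $\phi$ through the extra function slot of Lemma \ref{katok''}. The hypothesis $\sup_{n\ge 1}D_n(\phi)<\infty$ is much stronger than the $D_n(\phi)=o(n)$ condition needed by Lemma \ref{katok''}, so the latter is applicable with $l=1$, $\varphi_1=\phi$.

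First I would reduce to the case where $\mu$ is itself a finite convex combination of ergodic measures. Using the ergodic decomposition $\mu=\int\nu\, dP(\nu)$, together with the affinity of the entropy $h$ and of the map $\nu\mapsto\int\phi\, d\nu$ (well-defined in $[-\infty,\sup\phi]$ since $\sup\phi<\infty$), and the tightness of $P$ on the Polish space $\mathcal M$, I can approximate $\mu$ in the weak*-topology by $\mu'=\sum_{i=1}^{r}p_i\nu_i$, with each $\nu_i$ ergodic of finite entropy and $\int\phi\, d\nu_i>-\infty$, so that $h(\mu')$ is close to $h(\mu)$ and $\int\phi\, d\mu'$ is close to $\int\phi\, d\mu$ (the latter in the extended sense when $\int\phi\, d\mu=-\infty$).

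Second, for each such $\mu'$, I would construct an ergodic approximation via a specification-type concatenation. Apply Lemma \ref{katok''} with $l=1$, $\varphi_1=\phi$ to each $\nu_i$ to obtain, for all sufficiently large $n$, a finite set $F_i^n\subset E^n$ with $|(1/n)\log\#F_i^n-h(\nu_i)|\le\epsilon$ and $\sup_{[F_i^n]}|(1/n)S_n\phi-\int\phi\, d\nu_i|\le\epsilon$. Let $S_0\subset\mathbb N$ be the finite set of symbols appearing as the first or last letter of some word in $\bigcup_i F_i^n$; by transitivity, for each pair $(a,b)\in S_0\times S_0$ fix an admissible connector $\kappa(a,b)\in E^*$, all of length at most some $L=L(S_0)$. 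Form long closed admissible loops $W=w^{(1)}\kappa_1 w^{(2)}\kappa_2\cdots w^{(K)}\kappa_K$, where the blocks $w^{(j)}$ are drawn from the $F_i^n$ with asymptotic frequency $p_i$, and define $\mu_k$ as the shift-invariant measure equidistributed along the periodic orbit of $W$. Then $\mu_k$ is ergodic and supported on finitely many cylinders over $S_0$; since each $1$-cylinder $[a]$ has $\phi$-oscillation at most $D_1(\phi)<\infty$ and $S_0$ is finite, $\phi$ is bounded on the support of $\mu_k$, so $\mu_k\in\mathcal M_\phi(\sigma)$. For $n\gg L$, the cardinality estimates on $F_i^n$ and the uniform control of $S_n\phi/n$ on $[F_i^n]$, together with $\sup\phi<\infty$ and $\sup_n D_n(\phi)<\infty$ to absorb contributions from the connector segments, give $h(\mu_k)=h(\mu')+o(1)$, $\int\phi\, d\mu_k=\int\phi\, d\mu'+o(1)$, and $\mu_k\to\mu'$ in weak* (tested against a countable dense subset of $C_u(X)$).

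The main obstacle is the case $\int\phi\, d\mu=-\infty$, where Lemma \ref{katok''} cannot be applied to ergodic components with $\int\phi\, d\nu=-\infty$. In this case $\phi$ is necessarily unbounded below, so for every $M>0$ there exists $a_M\in\mathbb N$ with $\sup_{[a_M]}\phi<-M$; transitivity then provides a periodic point in $[a_M]$ whose empirical measure $\nu^{(M)}$ is ergodic with $\int\phi\, d\nu^{(M)}<-M+O(1)$. I would include such $\nu^{(M)}$'s as extra ergodic components in the convex combination $\mu'$ with small weights $p^{(M)}>0$ tuned so that $p^{(M)}|\int\phi\, d\nu^{(M)}|\to\infty$ while $\sum p^{(M)}\to 0$, keeping the weak*- and entropy-discrepancies between $\mu'$ and $\mu$ arbitrarily small while driving $\int\phi\, d\mu'$, and hence $\int\phi\, d\mu_k$, to $-\infty$ at the required rate.
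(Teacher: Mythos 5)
Your overall strategy---ergodic decomposition to reduce to a finite convex combination $\mu'=\sum_ip_i\nu_i$, Lemma \ref{katok''} applied to each component with $\phi$ and a determining family of test functions, and transitivity supplying boundedly long connectors between the finitely many boundary symbols---is precisely the mechanism behind the result the paper invokes (the paper gives no proof and cites \cite[Main Theorem]{Tak} verbatim). But the measure you actually construct breaks the argument at its central point: you take a \emph{single} closed loop $W=w^{(1)}\kappa_1\cdots w^{(K)}\kappa_K$ and let $\mu_k$ be the measure equidistributed on the periodic orbit of $W$. Every periodic-orbit measure has entropy zero, so the asserted estimate $h(\mu_k)=h(\mu')+o(1)$ is false whenever $h(\mu')>0$; the cardinalities $\#F_i^n$ cannot enter the entropy of a measure carried by one orbit. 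What is needed is the compact invariant subsystem consisting of \emph{all} concatenations $w^{(1)}\kappa_1w^{(2)}\kappa_2\cdots$ with the blocks ranging freely over the sets $F_i^n$ (the index pattern fixed periodically so as to realize the frequencies $p_i$, the connector determined by the adjacent end/start symbols), together with an ergodic measure on it that sees the full multiplicity of choices---for instance the image of the Bernoulli measure choosing blocks i.i.d.\ uniformly, or a measure of nearly maximal entropy of this subsystem. For such a $\mu_k$ one gets $h(\mu_k)\approx\bigl(n+O(L)\bigr)^{-1}\sum_ip_i\log\#F_i^n\approx\sum_ip_ih(\nu_i)$, while your estimates for $\int\phi\,d\mu_k$ and for weak* convergence (via Lemma \ref{katok''} with finitely many test functions and a diagonal argument) do go through on it; this is the construction of \cite{Tak}.

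The case $\int\phi\,d\mu=-\infty$ is also not handled correctly. You introduce ergodic measures $\nu^{(M)}$ supported on periodic orbits through a cylinder $[a_M]$ with $\sup_{[a_M]}\phi<-M$ and claim $\int\phi\,d\nu^{(M)}<-M+O(1)$. Transitivity gives no control on the period $p_M$ of the returning loop, and along the rest of the loop you only know $\phi\le\sup\phi$, so all you get is $\int\phi\,d\nu^{(M)}\le-M/p_M+\sup\phi$, which need not tend to $-\infty$; hence the weights $p^{(M)}$ cannot be tuned as claimed. The correct treatment stays inside the horseshoe construction: for an ergodic component $\nu$ with $h(\nu)<\infty$ and $\int\phi\,d\nu=-\infty$, Birkhoff's theorem gives $(1/n)S_n\phi\to-\infty$ $\nu$-a.e., so in the selection of $n$-cylinders one keeps the Shannon--McMillan--Breiman counting condition but replaces condition (b) of Lemma \ref{katok''} for $\phi$ by $\sup_{[w]}(1/n)S_n\phi\le-M$ (legitimate since $\sup_{n\ge1}D_n(\phi)<\infty$). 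The resulting ergodic measures lie in $\mathcal M_\phi(\sigma)$ (their supports involve finitely many symbols, so $\phi$ is bounded there), have entropy close to $h(\nu)$, and have $\int\phi$ finite but below $-M+{\rm const}$, which yields $\int\phi\,d\mu_k\to-\infty$ as required.
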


 \subsection{Key lower bound}\label{keyest}
 For an integer $l\geq1$, $\varphi_j\in C_u(X)$ and $\alpha_j\in\mathbb R$ for $j=1,\ldots,l$ 
consider an weak*-open set
\begin{equation*}\mathcal V\{\varphi_j,\alpha_j\}_{j=1,\ldots,l}= \left\{ \mu\in \mathcal M\colon
\int\varphi_jd\mu>\alpha_j\quad\text{for }j=1,\ldots,l\right\}.\end{equation*}
\begin{prop}\label{lowbound}
Let $X$ be transitive,
 $\phi\colon X\to\mathbb R$ a measurable function 
and $\mu_\phi$  a Gibbs state for the potential $\phi$.
Let $l\geq1$ be an integer, $\varphi_j\in C_u(X)$ and $\alpha_j\in\mathbb R$ for $j=1,\ldots,l$.
Then 
\begin{equation*}
\liminf_{n\to\infty} \frac{1}{n} \log \xi_n(\mathcal V\{\varphi_j,\alpha_j\}_{j=1,\ldots,l})
\geq \sup\{F(\mu)\colon\mu\in \mathcal V\{\varphi_j,\alpha_j\}_{j=1,\ldots,l}\}.
\end{equation*}
If moreover $X$ is finitely primitive, then the same inequality continues to hold
with $\xi_n$ replaced by $\eta_n,$ $\eta_{a,n}$, $\zeta_{y,n}$ and $\zeta_{a,y,n}$ with
 $a\in \mathbb N$, $y\in X$.
\end{prop}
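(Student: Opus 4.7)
The plan is to fix $\mu \in \mathcal{V}\{\varphi_j,\alpha_j\}$ with $F(\mu)$ close to the supremum on the right-hand side, reduce to the case where $\mu$ is ergodic with finite entropy, build a finite collection of cylinders whose empirical distributions lie in $\mathcal V\{\varphi_j,\alpha_j\}$ and whose $\mu_\phi$-weight is nearly $e^{nF(\mu)}$, and then, when finite primitiveness is available, glue in fixed connecting words from $\Lambda$ to turn cylinders into periodic points or pre-images of $y$ with prescribed initial symbol.

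First, we may assume $F(\mu)>-\infty$, hence $\mu\in\mathcal M_\phi(\sigma)$, and (since $F\le 0$ on invariant measures) $h(\mu)<\infty$. Because each $\varphi_j\in C_u(X)$ induces a weak*-continuous functional on $\mathcal M$, Lemma \ref{specification} lets us further assume $\mu$ is ergodic: the approximating ergodic sequence eventually enters $\mathcal V\{\varphi_j,\alpha_j\}$ and $F$ is continuous along it. Now by Lemma \ref{impose} the potential $\phi$ satisfies the hypotheses of Lemma \ref{katok''} (bounded above, bounded variation, integral finite against $\mu$), and Lemma \ref{varlem} gives $D_n(\varphi_j)=o(n)$. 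Apply Lemma \ref{katok''} jointly to $\phi,\varphi_1,\dots,\varphi_l$: for each $\epsilon>0$ and all large $n$ we obtain $F^n\subset E^n$ with $\#F^n\ge \tfrac12 e^{n(h(\mu)-\epsilon/2)}$, $\inf_{[w]}S_n\phi\ge n(\int\phi\,d\mu-\epsilon)$ for all $w\in F^n$, and $\delta_x^n\in\mathcal V\{\varphi_j,\alpha_j\}$ for all $x\in[F^n]$.

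The $\xi_n$-case is now immediate: by the Gibbs property \eqref{Gibbs},
\begin{equation*}
\xi_n(\mathcal V\{\varphi_j,\alpha_j\})\;\ge\;\mu_\phi[F^n]\;\ge\;c_0^{-1}\sum_{w\in F^n}e^{-Pn+\inf_{[w]}S_n\phi}\;\ge\;c\,e^{n(F(\mu)-2\epsilon)},
\end{equation*}
and $\epsilon\to 0$ yields the claim. For $\eta_n,\zeta_{y,n},\eta_{a,n},\zeta_{a,y,n}$ under finite primitiveness, fix a constant $M$ depending on $N$ and the constraints (say $M=N$ for $\eta_n$ and $\zeta_{y,n}$, $M=2N+1$ for the conditioned versions). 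For each $w\in F^{n-M}$, finite primitiveness yields connectors from $\Lambda$ so that the concatenation $w'\in E^n$ obtained by prepending (if needed) the symbol $a$ together with a $\lambda\in\Lambda$, and appending another $\lambda\in\Lambda$ linking $w$ to either its own start (periodic case) or to $y_0$ (pre-image case), is admissible. Different $w$'s produce distinct $w'$'s, hence at least $\#F^{n-M}$ periodic points or pre-images $x$ with $\delta_x^n\in\mathcal V\{\varphi_j,\alpha_j\}$ for large $n$ (the insertion perturbs each integral $\int\varphi_j d\delta_x^n$ by $O(M/n)$ since $\varphi_j$ is bounded). Along these $x$, $S_n\phi(x)\ge n(\int\phi\,d\mu-2\epsilon)-C$ using $\sup_nD_n(\phi)<\infty$ and the lower bound $\inf_{[\lambda]}S_N\phi\ge-PN+\log(c_0^{-1}\mu_\phi[\lambda])$ derived from \eqref{Gibbs} (uniform in $\lambda\in\Lambda$ since $\Lambda$ is finite). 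Summing and dividing by the relevant $Z_n=e^{nP+o(n)}$ provided by Proposition \ref{pressure} gives a lower bound $e^{n(F(\mu)-C\epsilon)}$, and $\epsilon\to 0$ finishes the proof.

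\textbf{Main obstacle.} The substantive technical step is the gluing procedure for the last four sequences: one must choose the connecting words from $\Lambda$ so that (a) the resulting $x$'s are genuinely distinct periodic points or pre-images, (b) the $O(1)$ additive corrections to $S_n\phi$ are controlled uniformly (requiring the Gibbs lower bound on $S_N\phi$ over the finitely many cylinders $[\lambda]$, $\lambda\in\Lambda$), and (c) the $O(M/n)$ perturbation of the empirical integrals is absorbed into the openness of $\mathcal V\{\varphi_j,\alpha_j\}$ together with the $\epsilon$-slack from Lemma \ref{katok''}(b). The counting step is robust since $M$ is fixed, so $\#F^{n-M}\ge e^{(n-M)(h(\mu)-\epsilon)}$ still has exponential rate $h(\mu)-\epsilon$, and no loss occurs in the final rate.
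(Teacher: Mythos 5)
Your proposal is correct and follows essentially the same route as the paper: reduction to ergodic measures via Lemma \ref{specification}, joint application of Lemma \ref{katok''} to $\phi,\varphi_1,\ldots,\varphi_l$, the Gibbs bound for $\xi_n$, and gluing with fixed-length connectors from $\Lambda$ together with Proposition \ref{pressure} for the normalizations. The only (harmless) deviations are cosmetic: you treat $\eta_n$ and $\zeta_{y,n}$ directly with $M=N$ connectors instead of deducing them from the $[a]$-conditioned sums as the paper does, and your justification of $h(\mu)<\infty$ should be attributed to the variational inequality of \cite[Theorem 1.4]{MauUrb01} (as in the paper) rather than to the stated codomain of $F$.
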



\begin{proof}
Write $\mathcal V$ for $\mathcal V\{\varphi_j,\alpha_j\}_{j=1,\ldots,l}$
and
let $\mu\in\mathcal V$.
If $\mu\notin\mathcal M_\phi(\sigma)$ then $F(\mu)=-\infty$. 
  Assume $\mu\in\mathcal M_{\phi}(\sigma)$.
 By \cite[Theorem 1.4]{MauUrb01},
  $P(\phi)<\infty$ implies $h(\mu)<\infty$.
By virtue of Lemma \ref{specification} we may assume $\mu$ is ergodic.
Let $\epsilon>0$ satisfy
$\int\varphi_j d\nu-\epsilon>\alpha_j$ for $j=1,\ldots,l$.
Let $n_0>1$ and for each integer $n\geq n_0$
let $F^n$ be the finite subset of $E^n$ for which the conclusion of Lemma \ref{katok''} holds
for $\varphi_j$ $(j=1,\ldots,l)$ and $\mu$: 
$\#F^n\geq\exp((h(\mu)-\epsilon)n)$;
$\inf_{[F^n]} S_n\phi
\geq\left(\int\phi d\mu-\epsilon\right)n$;
$[F^n]\subset
\left\{ x\in X\colon \delta_x^n\in\mathcal V\right\}$.
The last inclusion is a consequence of Lemma \ref{katok''}(b) and the choice of $\epsilon$.

It is convenient to split the rest of the proof of Proposition \ref{lowbound} into three steps,
corresponding to the sequences of distributions.
  \medskip

\noindent{\it Step 1. (Lower bound for empirical means).} 
For every $n\geq n_0$ and every $w\in F^n$ we have 
\begin{equation*}
 \begin{split}
\mu_\phi[w]&\geq c_0^{-1}e^{-P(\phi)n} \inf_{[w]} \exp S_n\phi\\
&\geq c_0^{-1}e^{-P(\phi)n}\exp\left(
 \left(\int\phi d\mu-\epsilon\right)n\right).\end{split}\end{equation*}
Summing this inequality over all $w\in F^n$ yields
\begin{align*} \frac{1}{n} \log\mu_\phi\left\{x\in X\colon\delta_x^n\in\mathcal V\right\}&
\geq\frac{1}{n}\log\left(\#F^n\inf_{w\in F^n}\mu_\phi[w]\right)\\
&\geq -P(\phi)+h(\mu)+\int\phi d\mu-2\epsilon-\frac{1}{n}\log c_0.
 \end{align*}
 Letting $n\to\infty$ and then $\epsilon\to 0$ yields the desired inequality.
\medskip


\noindent{\it Step 2. (Lower bound for weighted periodic points).}
Assume $X$ is finitely primitive. 
Let $\Lambda\subset E^*$ be the finite set and $N\geq0$ the integer given by the finite primitiveness.
Let $a\in \mathbb N$ and $n> n_0+2N$ an integer.
For each $w\in F^{n-2N-1}$ fix $\kappa=\kappa(w)\in\Lambda$, $\rho=\rho(w)\in \Lambda$ with $ a\kappa w\rho a\in E^{n+1}$.
The $n$-cylinder $[a\kappa w\rho]$ contains exactly one point from $[a]\cap{\rm Per}_n(\sigma)$.
Since each function $\varphi_j$ is bounded and $\Lambda$, $N$ are independent of $n$,
$\inf_{[a\kappa w\rho]} S_n\varphi_j>\alpha_jn$ holds for sufficiently large $n$.
 Hence
 $\delta_x^n\in\mathcal V$ holds
  for every $x\in  [a\kappa w \rho]$. Therefore
  \begin{align*}
\sum_{\stackrel{x\in [a]\cap{\rm Per}_n(\sigma)}{\delta_x^n\in\mathcal V}} \exp S_n\phi(x)
&\geq\sum_{w\in F^{n-2N-1}} \inf_{[a\kappa w\rho]}\exp S_n\phi\\
&\geq \left(\inf_{[\Lambda]}\exp S_N\phi\right)^2
\inf_{[a]}\exp\phi\sum_{w\in F^{n-2N-1}} \inf_{[w]}\exp S_{n-2N-1}\phi\\
&\geq \left(\inf_{[\Lambda]}\exp S_N\phi\right)^2
\inf_{[a]}\exp\phi\#F^{n-2N-1}\inf_{[F^{n-2N-1}]}\exp S_{n-2N-1}\phi.
\end{align*}
For sufficiently large $n$, we apply the estimates on $\mu$ to the last factor to get
\begin{align*}
\frac{1}{n} \log \eta_{a,n} (\mathcal V)=&\frac{1}{n}
\log\left(\frac{1}{Z_{n}(\phi,[a]\cap{\rm Per}_n(\sigma))}
\sum_{\stackrel{x\in[a]\cap{\rm Per}_n(\sigma)}{\delta_x^n\in   \mathcal V   }}
\exp S_n\phi(x)\right)\\
  \geq& -\frac{1}{n}\log Z_{n}(\phi,[a]\cap{\rm Per}_n(\sigma))+h(\mu)+\int\phi d\mu-2\epsilon.\end{align*}
 As $n\to\infty$, the first term of the last line converges to $-P(\phi)$ by Proposition \ref{pressure}. 
  Then letting $\epsilon\to0$ yields the desired inequality for $\eta_{a,n}$.
  Since ${\rm Per}_n(\sigma)$ contains $[a]\cap{\rm Per}_n(\sigma)$ and
 $\lim(1/n)\log Z_n(\phi,{\rm Per}_n(\sigma))= P(\phi)$ by Proposition \ref{pressure}, 
 the lower bound for $\eta_n$ also follows.
  \medskip
  
\noindent{\it Step 3. (Lower bound for weighted iterated pre-images).} 
Assume $X$ is finitely primitive and let $\Lambda$, $N$ be the same as in Step 2.
Let $a\in\mathbb N$, $y=(y_i)_{i\in\mathbb N}\in X$ and $n>n_0+2N$ an integer.
For each $w\in F^{n-2N-1}$ fix $\kappa=\kappa(w)\in\Lambda$, $\rho=\rho(w)\in \Lambda$
  with $a\kappa w\rho y_0\in E^{n+1}$. The $n$-cylinder $[a\kappa w\rho]$
  contains exactly one point from $\sigma^{-n}y$.
In the same way as in Step 2 we have
  \begin{align*}
\sum_{\stackrel{x\in \sigma^{-n}y}{\delta_x^n\in\mathcal V}} \exp S_{n}\phi(x)
\geq \left(\inf_{[\Lambda]}\exp S_N\phi\right)^2
\inf_{[a]}\exp\phi\#F^{n-2N-1}\inf_{[F^{n-2N-1}]}\exp S_{n-2N-1}\phi.
\end{align*}
For sufficiently large $n$, 
\begin{align*}
\frac{1}{n} \log \zeta_{a,y,n} (\mathcal V)\geq-\frac{1}{n}\log Z_{n}(\phi,[a]\cap\sigma^{-n}y)+h(\mu)+\int\phi d\mu-2\epsilon.\end{align*} 
 As $n\to\infty$, the first term of the right-hand side converges to $-P(\phi)$ by Proposition \ref{pressure}. 
  Then letting $\epsilon\to0$ yields the desired inequality for $\zeta_{a,y,n}$.
  Since $\sigma^{-n}y$ contains $[a]\cap\sigma^{-n}y$ and
 $\lim(1/n)\log Z_n(\phi,\sigma^{-n}y)= P(\phi)$ by Proposition \ref{pressure}, 
 the lower bound for $\zeta_{y,n}$ also follows.
\end{proof}

\subsection{End of proof of the lower bound}\label{endrow}
It is now straightforward to finish the proof of the lower bound \eqref{low} for all open sets.

 \begin{proof}[Proof of the lower bound  for open sets.]
 Let $X$ be finitely irreducible and $\mu_\phi$  a Gibbs state for a measurable potential $\phi$.
 Let $\mathcal V$ be an open subset of $\mathcal M$ of the form in 
 Proposition \ref{lowbound}. Then $$ \liminf_{n\to\infty}\frac{1}{n}\log
\xi_n({\mathcal V}) \ge 
\sup_{\mathcal V}F.$$
These open sets form a base of the weak*-topology on $\mathcal M$.
Let $\mathcal G$ be an arbitrary open subset of $\mathcal M$.
Take a subset $\{\mathcal V_\gamma\}_{\gamma}$ of this base with
$\mathcal G=\bigcup_{\gamma}\mathcal V_\gamma$.
We have \begin{equation*}
\liminf_{n\to\infty}\frac{1}{n}\log\xi_n(\mathcal G)
\ge \sup_{\gamma} 
\sup_{\mathcal V_\gamma}F=\sup_{\mathcal G} F=
-\inf_{\mathcal G}I.
\end{equation*}
If $X$ is finitely primitive, then the same reasoning 
yields \eqref{low} for all open sets and for all sequences of distributions
other than $\{\xi_n\}$. \end{proof}

 \section{Large deviations upper bound}\label{upbound}
 All that remains to show is the upper bound  \eqref{up} for all closed sets.
 In $\S$\ref{exptight} we show the exponential tightness of the sequences
of probability measures appearing in Theorems A, B and C.
 Based on a preliminary result in $\S$\ref{next}
  we prove a key upper bound in $\S$\ref{key}. Combining this bound
   with the exponential tightness 
we obtain the upper bound for all closed sets, completing the proofs
of all the theorems in $\S$\ref{completes}.

\subsection{Exponential tightness}\label{exptight}

To obtain the upper bound  for non-compact closed sets requires a way 
of showing that most of the probability masses (at least on an exponential scale) 
is concentrated on compact sets. 
A precise statement is as follows.

\begin{prop}\label{tight}
Let $\phi\colon X\to \mathbb R$ be a measurable function and $\mu_\phi$ a 
Gibbs state for the potential $\phi$. Then $\{\xi_n\}$ is exponentially tight, i.e.,
for every $L>0$
there exists a compact set $\mathcal K_L\subset\mathcal M$
such that $$\limsup_{n\to\infty}\frac{1}{n}\log\xi_n(\mathcal K_L^c)\leq-L.$$
If moreover $X$ is finitely primitive, then $\eta_n$, $\eta_{a,n}$, $\zeta_{y,n}$ and $\zeta_{a,y,n}$
with $a\in \mathbb N$, $y\in X$ are exponentially tight.
\end{prop}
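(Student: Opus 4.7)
The plan is to construct, for each $L>0$, a compact set $\mathcal{K}_L\subset\mathcal{M}$ of the form
\[
\mathcal{K}_L=\bigcap_{i,j\in\mathbb{N}}\left\{\mu\in\mathcal{M}:\mu\bigl(\{x\in X:x_i\geq K_{i,j}\}\bigr)\leq\epsilon_{i,j}\right\},
\]
for positive integers $K_{i,j}$ and reals $\epsilon_{i,j}>0$ to be chosen. Each single-coordinate set $\{x:x_i\geq K\}$ is clopen in $X$, so every defining inequality is weak$^{*}$-closed and $\mathcal{K}_L$ is closed. Setting $\epsilon_{i,j}=2^{-(i+1)}\epsilon_j$ for a sequence $\epsilon_j\downarrow0$, for every $\delta>0$ one finds $j$ with $\epsilon_j<\delta$; then every $\mu\in\mathcal{K}_L$ assigns mass at least $1-\sum_i\epsilon_{i,j}=1-\epsilon_j>1-\delta$ to the set $\{x\in X:x_i<K_{i,j}\text{ for every }i\}$, which is compact in $X$ as a product of finite sets, so Prokhorov's theorem yields compactness of $\mathcal{K}_L$.

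The exponential decay rests on one cylinder estimate. Writing $p_K=\sum_{k\geq K}\mu_\phi[k]$, which tends to $0$ as $K\to\infty$, I first show that for any integers $0\leq k_1<k_2<\cdots<k_m$,
\[
\mu_\phi\{x\in X:x_{k_l}\geq K\text{ for every }l=1,\ldots,m\}\leq(c_0^{6}\,p_K)^{m}.
\]
The idea is to split each admissible $(k_m+1)$-string $w$ into its $m$ constrained single-letter blocks $w_{k_l}$ and the free sub-strings between them, apply Lemma \ref{distor}(a) iteratively to bound $\mu_\phi[w]$ by $c_0^{O(m)}$ times the product of $\mu_\phi$-masses of the constituents, and then sum: each constrained factor contributes $p_K$, each free factor at most $1$. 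Combined with the union bound over subsets $J\subset\{0,\ldots,n-1\}$ of size $\lceil\epsilon n\rceil$ and the binary-entropy estimate $\binom{n}{\lceil\epsilon n\rceil}\leq e^{nH(\epsilon)}$, this yields
\[
\mu_\phi\bigl\{x:\#\{0\leq l\leq n-1:x_{l+i}\geq K\}>\epsilon n\bigr\}\leq\exp\bigl(n[H(\epsilon)+\epsilon(6\log c_0+\log p_K)]\bigr),
\]
with a bound independent of the shift index $i$.

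Since $\delta_x^n(\{y:y_i\geq K\})=n^{-1}\#\{0\leq l\leq n-1:x_{l+i}\geq K\}$, taking $\epsilon_j=2^{-j}$ and choosing each $K_{i,j}$ so that $p_{K_{i,j}}$ is small enough to make the exponent above at most $-(L+i+j)$ (which is possible because $p_K\to0$) gives
\[
\xi_n(\mathcal{K}_L^{c})\leq\sum_{i,j\in\mathbb{N}}\mu_\phi\bigl\{x:\delta_x^n(\{y:y_i\geq K_{i,j}\})>\epsilon_{i,j}\bigr\}\leq\sum_{i,j}e^{-(L+i+j)n}=O(e^{-Ln}),
\]
i.e.\ the desired exponential tightness of $\{\xi_n\}$.

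For the sequences $\eta_n,\zeta_{y,n},\eta_{a,n},\zeta_{a,y,n}$ the same compact $\mathcal{K}_L$ works (with a possibly adjusted choice of $\epsilon_j$): the Gibbs inequality \eqref{Gibbs} bounds $\exp S_n\phi(x)\leq c_0e^{P(\phi)n}\mu_\phi[w]$ for $x\in[w]$, so each weighted sum in the numerator of $\eta_n,\zeta_{y,n}$, etc.\ reduces to a cylinder sum of $\mu_\phi$ to which the same cylinder estimate applies, while the denominators $Z_n(\phi,\cdot)$ are asymptotically $e^{P(\phi)n}$ by Proposition \ref{pressure}. The main obstacle I anticipate is reconciling the Prokhorov compactness (which requires uniform control over every coordinate) with the cylinder estimate (most naturally phrased for a single coordinate); the double-index construction and the shift-independence of the combinatorial bound are precisely the devices that overcome this.
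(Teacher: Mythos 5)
Your treatment of $\{\xi_n\}$ is correct, and it is a genuinely different route from the paper's. The paper fixes one compact set $\Gamma_\ell=\{x\colon x_i\leq N_i\ \forall i\}$ per scale, proves the recurrence estimate of Lemma \ref{expo} by an induction over ``free return times'' and depths (using Lemma \ref{distor}(b)), and then runs a Sanov-style exponential Chebyshev bound on $\delta_x^n(\Gamma_\ell^c)$; you instead use single-coordinate events $\{y\colon y_i\geq K\}$, the iterated use of Lemma \ref{distor}(a) to get $\mu_\phi\{x\colon x_{k_1},\ldots,x_{k_m}\geq K\}\leq (c_0^6p_K)^m$, and a union bound over positions with the binomial/entropy count. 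This first-moment argument avoids both the return-time bookkeeping and the Chebyshev step, at the price of the doubly indexed cutoffs $K_{i,j}$; the shift-independence of your estimate and the choice of exponent $-(L+i+j)$ do make the construction close up, and the compactness of $\mathcal K_L$ via clopen test sets, Portmanteau/continuity and Prohorov is fine. What the paper's heavier Lemma \ref{expo} buys is a single estimate that it can reuse verbatim for all five sequences.

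The genuine gap is in your last paragraph, i.e.\ in the transfer to $\eta_n$, $\eta_{a,n}$, $\zeta_{y,n}$, $\zeta_{a,y,n}$. After applying \eqref{Gibbs} you sum $\mu_\phi[x_0,\ldots,x_{n-1}]$ over the points of ${\rm Per}_n(\sigma)$ resp.\ $\sigma^{-n}y$ lying in the bad event, and to dominate this by a $\mu_\phi$-probability to which your cylinder estimate applies you need the event $\{\delta_x^n\in\mathcal K_L^c\}$, restricted to those points, to be read off from the first $n$ symbols. For periodic points this is true but must be said: by $n$-periodicity, $\delta_x^n(\{y'\colon y'_i\geq K\})$ equals the frequency of symbols $\geq K$ among $x_0,\ldots,x_{n-1}$, independently of $i$, and then your bound applies. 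For pre-images $x=wy$ it is false as stated: the count for index $i$ involves up to $\min(i,n)$ digits of the fixed tail $y$, and for the (always existing) pairs $(i,j)$ with $\epsilon_{i,j}n<1$ a single digit $y_m\geq K_{i,j}$ with $m\leq i-1$ forces \emph{every} pre-image outside $\mathcal K_L$. Concretely, in the full shift choose $y$ with $y_m>\max\{K_{i,j}\colon i,j\leq m+1\}$; then $\zeta_{y,n}(\mathcal K_L^c)=1$ for all large $n$, so ``the same compact $\mathcal K_L$'' does not work, no matter how you adjust $\epsilon_j$. The repair is easy and permitted because $y$ is fixed in the statement: choose $K_{i,j}$ also exceeding $\max\{y_0,\ldots,y_{i-1}\}$ (so the compact set depends on $y$), after which the $w$-part of the count must exceed $\epsilon_{i,j}n$ and your estimate goes through; together with Proposition \ref{pressure} for the denominators (this is the only place finite primitiveness is needed) the argument is then complete.
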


\begin{proof}
The proof of Proposition \ref{tight} consists of four steps.
In Step 1 we prove a key recurrence estimate relative to certain compact subsets of $X$.
In subsequent steps, we use this estimate
to construct compact subsets of $\mathcal M$ as in the statement of Proposition \ref{tight} 
for each sequence of distributions.
\medskip

 \noindent{\it Step 1. (Recurrence estimate relative to compact subsets of $X$).}\label{interm}
Let $\theta\in\mathbb R$ be such that
\begin{equation}\label{thet}
0<\theta\leq\min\left\{c_0^{-3},\frac{1}{5}\right\},
\end{equation}
where $c_0$ is in \eqref{Gibbs}.
Let $\{N_i\}_{i\in\mathbb N}$ be
a non-decreasing sequence in $\mathbb N$ such that
\begin{equation}\label{assumption}
\sum_{k=N_i+1}^{\infty}\mu_\phi[k]\leq\theta^{i+1}\ \text{ for every $i\in\mathbb N$}.\end{equation}
Define 
$$\Gamma=\{x\in X\colon x_i\leq N_i\quad\text{for every }i\in\mathbb N\},$$
which is a compact subset of $X$.



\begin{lemma}\label{expo}
For every integer $n\geq1$ and $m=1,\ldots, n$,
\begin{equation*}
\mu_\phi\left\{x\in X\colon \delta_x^n(\Gamma^c)=\frac{m}{n}\right\}\leq
\frac{2^n(4\theta)^m}{1-4\theta}.\end{equation*}
\end{lemma}
In other words, the $\mu_\phi$-measure of the set of points which 
visit the complement of $\Gamma$ exactly $m$-times up to time $n-1$ decays exponentially in $m$.
The proportional length of the time interval in which this exponential decay is not apparent 
due to the factor $2^n$ can be made arbitrarily short  by choosing sufficiently small $\theta$ and then choosing an appropriate
 $\{N_i\}_{i\in\mathbb N}$.

\begin{proof}[Proof of Lemma \ref{expo}.]
For each $x\in\Gamma^c$ define
$p(x)=\min\{i\in\mathbb N\colon x_i>N_i\}.$
Since $\{N_i\}$ is non-decreasing,
 $x\in \Gamma^c$ implies 
 $\sigma^{i}x\in\Gamma^c$
for $i=0,\ldots, p(x)$. 
 Define a sequence $0\leq n_1<n_2<\cdots$ of integers inductively as follows:
$n_1=\min\{i\geq0\colon \sigma^ix\in\Gamma^c\}$;
 $n_{j+1}=\min\{i>n_j+p(\sigma^{n_j}x)\colon \sigma^ix\in \Gamma^c\}$
 for $j\geq1$.
 The $n_j$ are called {\it free return times of $x$}.
 Put $p_j=p(\sigma^{n_j}(x))+1$ and call it {\it the depth of $n_j$}.
 Notice that $0\leq n_1<n_1+p_1\leq n_2<n_2+p_2\leq n_3<\cdots.$

Let $s\geq1$ be an integer and  $j\in\{1,\ldots,s\}$.
For two $j$-strings $n_1\cdots n_j$, $p_1\cdots p_j$ of integers
with $0\leq n_1<\cdots <n_j$
and $p_1,\ldots,p_j\geq1,$
define $\Gamma_{n_1\cdots n_j}^{p_1\cdots p_j}$ to be the set of $x\in X$ for which
 $n_1,\ldots,n_j$ are all the free return times in $[0, n_j]$, with $p_1,\ldots,p_j$ the 
 corresponding depths.
The sequence $\{\Gamma_{n_1\cdots n_j}^{p_1\cdots p_j}\}_{j=1}^s$ of sets is decreasing in $j$.

By induction we show 
\begin{equation}\label{measure}
\mu_\phi(\Gamma_{n_1\cdots n_j}^{p_1\cdots p_j})\leq \theta^{p_1+\cdots +p_j}
\ \ \text{for $j=1,\ldots, s$.}\end{equation}
Start with $j=1$.
If $n_1=0$ then $p_1=1$ and
 by \eqref{assumption} with $i=0$,
 $$\mu_\phi(\Gamma_{n_1}^{p_1})\leq\sum_{k=N_0+1}^\infty\mu_\phi[k]\leq\theta=\theta^{p_1}.$$
If $n_1>0$, 
 let $w\in E^{n_1+p_1-1}$ be such that the corresponding cylinder 
 $[w]$ intersects $\Gamma_{n_1}^{p_1}$.
 Any point in 
$\Gamma_{n_1}^{p_1}$ is contained in such a cylinder.
By Lemma \ref{distor}(b) and \eqref{assumption} with $i=p_1$,
$$\frac{\mu_\phi([w]\cap\Gamma_{n_1}^{p_1})}{\mu_\phi[w]}\leq 
\frac{\sum_{k=N_{p_1}+1}^\infty\mu_\phi[wk]}{\mu_\phi[w]}\leq c_0^3
\sum_{k=N_{p_1}+1}^\infty\mu_\phi[k]\leq c_0^3\theta^{p_1+1}\leq\theta^{p_1}.$$
Rearranging this inequality and summing the result over all 
$w$ yield
\begin{equation*}\label{first}\mu_\phi(\Gamma_{n_1}^{p_1})=
\sum_{w}\mu_\phi([w]\cap\Gamma_{n_1}^{p_1})\leq
\theta^{p_1}\sum_{w}\mu_\phi[w]\leq
 \theta^{p_1}.\end{equation*}
 Hence, \eqref{measure} holds for $j=1$.
 
 Proceeding to the general step of induction,
 let $s>1$ and $j\in\{1,\ldots,s-1\}$.
 Let $w\in E^{n_{j+1}+p_{j+1}-1}$ be such that the corresponding cylinder
$[w]$ is contained in $\Gamma_{n_1\cdots n_j}^{p_1\cdots p_j}$ and 
intersects $\Gamma_{n_1\cdots n_{j+1}}^{p_1\cdots p_{j+1}}$.
Any point in 
$\Gamma_{n_1\cdots n_{j+1}}^{p_1\cdots p_{j+1}}$ is contained in such a cylinder.
By Lemma \ref{distor}(b) and \eqref{assumption} with $i=p_{j+1}$,
\begin{equation*}\label{eqlast}
\begin{split}
\frac{\mu_\phi([w]\cap  \Gamma_{n_1\cdots n_{j+1}}^{p_1\cdots p_{j+1}})}{\mu_\phi[w]}
\leq\frac{\sum_{k=N_{p_{j+1}}+1}^\infty\mu_\phi[wk]}{\mu_\phi[w]}
\leq c_0^3\sum_{k=N_{p_{j+1}}+1}^\infty\mu_\phi[k]\leq c_0^3\theta^{p_{j+1}+1}
\leq\theta^{p_{j+1}}.
\end{split}
\end{equation*}
Rearranging this and summing the result over all $w$ yield
$$\mu_\phi(\Gamma_{n_1\cdots n_{j+1}}^{p_1\cdots p_{j+1}})\leq\theta^{p_{j+1}}
\mu_\phi( \Gamma_{n_1\cdots n_{j}}^{p_1\cdots p_{j}}),$$
which recovers the assumption of the induction.

Let $n\geq1$ be an integer and $m\in\{1,\ldots, n\}$.
Notice that
$$\mu_\phi\left\{x\in X\colon\delta_x^n(\Gamma^c)=\frac{m}{n}\right\}\leq\sum_{s=1}^m
\sum_{P=m}^\infty\sum_{\stackrel{(p_1,\ldots,p_s)}
{\sum_{j=1}^s p_j=P}}\sum_{
\stackrel{(n_1,\ldots,n_s)}{0\leq n_1<\cdots<n_s\leq n-1}}  \mu_\phi(\Gamma_{n_1\cdots n_s}^{p_1\cdots p_s}).$$
For each fixed $s\in\{1,\ldots,m\}$, the number of ways of locating free return times $n_1,\ldots,n_s$
in $[0,n]$ is 
$\left(\begin{smallmatrix}n\\s\end{smallmatrix}\right)\leq 2^n$.
For each location $(n_1,\ldots,n_s)$ of free return times,
the number of all feasible combinations of depths $(p_1,\ldots,p_s)$ with $\sum_{j=1}^s p_j=P$ is bounded by the number of ways
of dividing $P$-objects into $s$-groups, and so
$\left(\begin{smallmatrix}P+s-1\\s-1\end{smallmatrix}\right)\leq 2^{P+s-1}$.
This and \eqref{measure} yield
\begin{align*}\mu_\phi\left\{x\in X\colon\delta_x^n(\Gamma^c)=\frac{m}{n}\right\}&\leq 2^n\sum_{s=1}^m
\sum_{P=m}^\infty 2^{P+s-1}\theta^P\\
&\leq2^n\sum_{P=m}^\infty (4\theta)^P\\
&=\frac{2^n(4\theta)^m}{1-4\theta},\end{align*}
as required.
\end{proof}

\noindent{\it Step 2. (Exponential tightness for empirical means).}
We adapt a portion of the proof of Sanov's Theorem \cite[Lemma 6.2.6]{DemZei98} 
to show the exponential tightness for $\{\xi_n\}$, using
 Lemma \ref{expo} to compensate the lack of independence in our setting.

For each integer $\ell\geq1$ fix $\theta\in\mathbb R$ 
such that \eqref{thet} holds and
\begin{equation}\label{theta}\frac{1}{1-4\theta}\sum_{m=0}^\infty e^{2\ell^2m}(4\theta)^m\leq2.\end{equation}
Fix a non-decreasing integer sequence $\{N_i\}_{i\in\mathbb N}$ 
satisfying \eqref{assumption}.
Define a compact set
$$\Gamma_\ell=\{x\in X\colon x_i\leq N_i\text{ for every }i\in\mathbb N\},$$
and $$\mathcal K^\ell=\left\{\nu\in\mathcal M\colon \nu(\Gamma_\ell)\geq1-\frac{1}{\ell}\right\}.$$
Since $\mathcal M$ is a Polish space and $\Gamma_\ell$ is a closed set, by Portmanteau's Theorem 
the weak*-convergence
$\mu_k\to\mu$ for a sequence $\{\mu_k\}$ in $\mathcal K^\ell$ of probability measures
implies
$\displaystyle{\limsup\mu_k(\Gamma_\ell)}\leq\mu(\Gamma_\ell)$.
Hence, $\mathcal K^\ell$ is a closed set.
For each integer $L\geq1$ define
$$\mathcal K_L=\bigcap_{\ell=L}^\infty \mathcal K^\ell.$$
By the compactness of each $\Gamma_\ell$, $\mathcal K_L$ is tight and 
any sequence contained in it has a limit point by Prohorov's Theorem.
As $\mathcal K_L$ is closed, it is sequentially compact.
Since the weak*-topology on $\mathcal M$ is metrizable, $\mathcal K_L$ is a compact
subset of $\mathcal M$.
For every $n\geq1$,
\begin{align*}
\mu_\phi\{x\in X\colon\delta_x^n\notin \mathcal K^\ell\}&=\mu_\phi\left\{x\in X\colon\delta_x^n(\Gamma_\ell^c)\geq\frac{1}{\ell}\right\}\\
&=\mu_\phi\left\{x\in X\colon \exp\left(\ell^2n \delta_x^n(\Gamma_\ell^c)\right)\geq e^{\ell n}\right\}\\
&\leq e^{-2\ell n}\int_X \exp\left(2\ell^2n\delta_x^n(\Gamma_\ell^c)\right)d\mu_\phi(x)\\
&=e^{-2\ell n}\sum_{m=0}^{n}e^{2\ell^2m}\mu_\phi\left\{x\in X\colon \delta_x^n(\Gamma_\ell^c)=\frac{m}{n}\right\}\\
&\leq \frac{2^ne^{-2\ell n}}{1-4\theta}\sum_{m=0}^{n}e^{2\ell^2m}
(4\theta)^m\\
&\leq2^{n+1}e^{-2\ell n}.
\end{align*}
We have used Chebyshev's bound for the first inequality,
Lemma \ref{expo} for the second one and \eqref{theta} for the last one.
For $L$ large enough,
$$\xi_n(\mathcal K_L^c)\leq\sum_{\ell=L}^\infty\mu_\phi\{x\in X\colon\delta_x^n\notin \mathcal K^\ell\}
\leq2^{n+1}\sum_{\ell=L}^{\infty} e^{-2\ell n}\leq e^{-Ln}.$$
This yields
$\limsup(1/n)\log\xi_n(\mathcal K_L^c)\leq-L$ as required.
\medskip

\noindent{\it Step 3. (Exponential tightness for weighted periodic points).}
We assume $X$ is finitely primitive, and show the 
exponential tightness for $\{\eta_n\}$ and $\{\eta_{a,n}\}$.
For each integer $\ell\geq1$ fix $\theta\in\mathbb R$ such that \eqref{thet} holds and
\begin{equation}\label{theta'}2(4\theta)^{\frac{1}{2\ell}}\leq e^{-2\ell}.\end{equation}
Fix a non-decreasing integer sequence $\{N_i\}_{i\in\mathbb N}$ 
satisfying \eqref{assumption}.
As in Step 2, define
$\Gamma_\ell=\{x\in X\colon x_i\leq N_i\text{ for every }i\in\mathbb N\}$
and $\mathcal K^\ell=\left\{\nu\in\mathcal M\colon \nu(\Gamma_\ell)\geq1-1/\ell\right\}.$
For each integer $L\geq1$ the set
$\mathcal K_L=\bigcap_{\ell=L}^\infty \mathcal K^\ell$
  is compact for the same reason as in Step 2.
For every $n\geq1$,
\begin{align*}
\eta_n(\mathcal M\setminus\mathcal K^\ell)&=\frac{1}{Z_{n}(\phi,{\rm Per}_n(\sigma))}\sum_{\stackrel{x\in{\rm Per}_n(\sigma)}{\delta_x^n\notin\mathcal K^\ell}}
\exp S_n\phi(x)\\
&=\frac{1}{Z_{n}(\phi,{\rm Per}_n(\sigma))}\sum_{\stackrel{x\in{\rm Per}_n(\sigma)}{\delta_x^n(\Gamma_\ell^c)
\geq1/\ell}}\exp S_n\phi(x)\\
&\leq \frac{1}{Z_{n}(\phi,{\rm Per}_n(\sigma))}\sum_{m=\lfloor n/\ell\rfloor}^{n}\sum_{\stackrel{x\in{\rm Per}_n(\sigma)}{\delta_x^{n}(\Gamma_\ell^c)=m/n}} \exp 
S_n\phi(x)\\
&\leq
\frac{c_0e^{P(\phi)n}}{Z_{n}(\phi,{\rm Per}_n(\sigma))}\sum_{m=\lfloor n/\ell\rfloor}^{n}
\sum_{\stackrel{x\in{\rm Per}_n(\sigma)}{\delta_x^{n}(\Gamma_\ell^c)=m/n}}\mu_\phi[x_0,\ldots,x_{n-1}]\quad\text{by }\eqref{Gibbs}.
\end{align*}
Lemma \ref{expo} gives
\begin{align*}
\sum_{\stackrel{x\in{\rm Per}_n(\sigma)}{\delta_x^{n}(\Gamma_\ell^c)=m/n}}\mu_\phi[x_0,\ldots,x_{n-1}]
&\leq\mu_\phi\left\{x\in X\colon \delta_x^n(\Gamma_\ell^c)=\frac{m}{n}\right\}\\
&\leq \frac{2^n(4\theta)^m}{1-4\theta}.
\end{align*}
Plugging this into the above inequality and then using \eqref{theta'} give
\begin{align*}
\eta_n(\mathcal M\setminus\mathcal K^\ell)&\leq
\frac{2^nc_0}{1-4\theta}
\frac{e^{P(\phi)n}}{Z_{n}(\phi,{\rm Per}_n(\sigma))}
\sum_{m=\lfloor n/\ell\rfloor}^{n}(4\theta)^m\\
&\leq\frac{2^nc_0}{(1-4\theta)^2}\frac{e^{P(\phi)n}}{Z_{n}(\phi,{\rm Per}_n(\sigma))}(4\theta)^{\lfloor n/\ell\rfloor}\\
&\leq
\frac{c_0}{(1-4\theta)^2}\frac{e^{P(\phi)n}}{Z_{n}(\phi,{\rm Per}_n(\sigma))}e^{-2\ell n}.
\end{align*}
The last inequality holds provided $n>2L$.
For $L$ large enough,
\begin{align*}\eta_n( \mathcal K_L^c)&\leq\sum_{\ell=L}^\infty\eta_n(\mathcal M
\setminus\mathcal K^\ell)\\
&\leq\frac{c_0}{(1-4\theta)^2}\frac{e^{P(\phi)n}}{Z_{n}(\phi,{\rm Per}_n(\sigma))}\sum_{\ell=L}^{\infty} e^{-2\ell n}\\
&\leq\frac{c_0}{(1-4\theta)^2}\frac{e^{P(\phi)n}}{Z_{n}(\phi,{\rm Per}_n(\sigma))} e^{-Ln}.\end{align*}
Proposition \ref{pressure} gives $\lim(1/n)\log Z_{n}(\phi,{\rm Per}_n(\sigma))= P(\phi)$,
 and thus we obtain
$\limsup(1/n)\log\eta_n(\mathcal K_L^c)\leq -L$ as required.
The exponential tightness for $\{\eta_{a,n}\}$ $(a\in\mathbb N)$ follows from simply replacing
${\rm Per}_n(\sigma)$ in the above formulas by $[a]\cap{\rm Per}_n(\sigma)$.
\medskip

\noindent{\it Step 4. (Exponential tightness for weighted iterated pre-images).}
We assume $X$ is finitely primitive and show the 
exponential tightness for $\{\zeta_{y,n}\}$ and $\{\zeta_{a,y,n}\}$.
In the same way as in Step 3, we have
\begin{align*}
\zeta_{y,n}(\mathcal M\setminus\mathcal K^\ell)&\leq
\frac{c_0e^{P(\phi)n}}{Z_{n}(\phi,\sigma^{-n}y)} \sum_{m=\lfloor n/\ell\rfloor}^{n}
\sum_{\stackrel{x\in\sigma^{-n}y}{\delta_x^{n}(\Gamma_\ell^c)=m/n}}\mu_\phi[x_0,\ldots,x_{n-1}]\\
&\leq
\frac{c_0}{(1-4\theta)^2}\frac{e^{P(\phi)n}}{Z_{n}(\phi,\sigma^{-n}y)}e^{-2\ell n}.
\end{align*}
Hence,
for $L$ large enough and $n>2L$ the same upper bound as in Step 3 is available on $\zeta_{y,n}( \mathcal K_L^c).$
Proposition \ref{pressure} gives $\lim(1/n)\log Z_{n}(\phi,\sigma^{-n}y)=P(\phi)$, and 
we obtain $\limsup(1/n)\log\zeta_{y,n}(\mathcal K_L^c)\leq -L$ as required.
The exponential tightness for $\{\zeta_{a,y,n}\}$ ($a\in\mathbb N$, $y\in X$) follows from
simply replacing $\sigma^{-n}y$ in the above formulas by $[a]\cap\sigma^{-n}y.$
\end{proof}


\subsection{Finite Markov system}\label{next}
By {\it a finite Markov system} we mean a pair $(a,G^n)$
with $a\in \mathbb N$, $n>1$ and $G^{n}$ a finite subset of $E^{n}(a,a)$.
The next lemma is proved along the line of the thermodynamic formalism
 for finite Markov shifts \cite{Bow75,Rue78}.

 \begin{lemma}\label{horse}
 Let $\phi\colon X\to\mathbb R$ be a measurable function and 
  $\mu_\phi$ a Gibbs state for the potential $\phi$.
  Let $l\geq1$ be an integer, 
 $\varphi_j\colon X\to\mathbb R$ continuous 
and $\alpha_j\in \mathbb R$ for $j=1,\ldots,l$. 
There exists $n_{1}>1$ such that the following holds: let
$n\geq n_{1}$ be an integer and 
$(a,G^{n})$ a finite Markov system satisfying
 $\inf_{[G^{n}]}  S_{n-1}\varphi_j>\alpha_j(n-1)$ for $j=1,\ldots,l$.
There exists a $\sigma$-invariant measure $\mu\in\mathcal M$ which is supported on a compact set 
and satisfies
$$ 
\log  \mu_\phi[G^{n}]  \leq F(\mu)(n-1)
+\log c_0$$
and
$$ \int\varphi_jd\mu>\alpha_j\quad\text{for }j=1,\ldots,l,$$
where $c_0$ is the constant in \eqref{Gibbs}.
\end{lemma}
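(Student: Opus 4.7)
Following the strategy of thermodynamic formalism for finite Markov shifts, I will construct $\mu$ on a compact sub-system built out of $G^n$ and then symmetrize.

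Since $G^n\subset E^n(a,a)$, every $w\in G^n$ begins and ends with the symbol $a$, so consecutive elements of $G^n$ can be glued at their common endpoint. This defines a continuous injection
\[
\Psi\colon(G^n)^{\mathbb N}\to X,\qquad(w^{(0)},w^{(1)},\ldots)\mapsto w^{(0)}_0\cdots w^{(0)}_{n-2}\,w^{(1)}_0\cdots w^{(1)}_{n-2}\cdots,
\]
whose image $Y:=\Psi((G^n)^{\mathbb N})$ is a compact subset of $X$; the map $\Psi$ intertwines the shift on $(G^n)^{\mathbb N}$ with $\sigma^{n-1}|_{Y}$, so $(Y,\sigma^{n-1})$ is topologically conjugate to the full shift on the finite alphabet $G^n$. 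On $(G^n)^{\mathbb N}$ take the Bernoulli measure $\tilde\nu$ with weights $p_w=\mu_\phi[w]/\mu_\phi[G^n]$, set $\nu=\Psi_*\tilde\nu$, and define
\[
\mu=\frac{1}{n-1}\sum_{i=0}^{n-2}\sigma^i_*\nu,
\]
a $\sigma$-invariant probability measure supported on the compact set $\bigcup_{i=0}^{n-2}\sigma^i Y$. The bases $\sigma^iY$ are essentially disjoint under $\nu$ (the Bernoulli randomness rules out degenerate sequences), so Abramov's formula for towers of constant height yields $h_\sigma(\mu)=h_{\sigma^{n-1}}(\nu)/(n-1)=H(p)/(n-1)$.

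The constraint $\int\varphi_jd\mu>\alpha_j$ is immediate: $(n-1)\int\varphi_jd\mu=\int S_{n-1}\varphi_j\circ\Psi\,d\tilde\nu$, and for each $\underline w$ the point $\Psi(\underline w)$ lies in the $n$-cylinder $[w^{(0)}]\subset[G^n]$, on which by hypothesis $S_{n-1}\varphi_j>\alpha_j(n-1)$. For the main inequality, the choice of $p_w$ gives
\[
H(p)=\log\mu_\phi[G^n]-\sum_{w\in G^n}p_w\log\mu_\phi[w].
\]
At the canonical $(n-1)$-periodic point $x_w=\Psi(w,w,\ldots)\in[w]\cap Y$, the Gibbs inequality yields $\log\mu_\phi[w]\leq-Pn+S_n\phi(x_w)+\log c_0$, and by periodicity $S_n\phi(x_w)=S_{n-1}\phi(x_w)+\phi(x_w)$. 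Simultaneously, the uniform distortion bound $D_{n-1}(\phi)\leq 2\log c_0$ from Lemma \ref{impose} controls the deviation between $\int S_{n-1}\phi\,d\nu$ and $\sum_wp_wS_{n-1}\phi(x_w)$. Assembling
\[
(n-1)F(\mu)=-P(n-1)+H(p)+\int S_{n-1}\phi\,d\nu,
\]
the $S_{n-1}\phi(x_w)$ contributions cancel between the two sums, and for $n\geq n_1$ chosen sufficiently large the residual bounded-variation and periodic-point $\phi$-corrections fit within the single $\log c_0$ slack to yield $\log\mu_\phi[G^n]\leq (n-1)F(\mu)+\log c_0$.

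The main obstacle is the precise constant tracking in the final estimate: ensuring that the several error terms (the distortion $D_{n-1}(\phi)$, the $\phi(x_w)$ correction coming from decomposing $S_n\phi=S_{n-1}\phi+\phi\circ\sigma^{n-1}$, and the bookkeeping of $P$ versus $P(n-1)$) compress into exactly $\log c_0$. This is why the Bernoulli weights are chosen to match $\mu_\phi$ and the representatives $x_w$ are taken to be the canonical periodic points; the secondary technical point is verifying Abramov's entropy identity for the symmetrized tower measure $\mu$, which requires the essential disjointness of the bases $\sigma^iY$ under $\nu$.
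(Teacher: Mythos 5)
Your construction is a genuinely different route from the paper's: instead of invoking the variational principle for $\widehat\sigma=\sigma^{n-1}$ restricted to $K=\bigcap_{m}\widehat\sigma^{-m}[G^n]$ and taking a measure attaining the supremum of $\widehat\nu\mapsto h(\widehat\nu)+\int S_{n-1}\phi\,d\widehat\nu$ (which exists by compactness and upper semi-continuity of entropy on the induced finite full shift), you exhibit an explicit witness, the Bernoulli measure with weights $p_w=\mu_\phi[w]/\mu_\phi[G^n]$, and spread it by $\sigma$. That part is sound: $Y$ is compact, the identity $h_\sigma(\mu)=H(p)/(n-1)$ holds for any $\sigma^{n-1}$-invariant $\nu$ (no essential disjointness of the levels is needed: it follows from $h_{\sigma^{n-1}}(\mu)=(n-1)h_\sigma(\mu)$, affinity of entropy, and invariance under the push-forwards $\sigma^i_*$), and the strict bound $\int\varphi_j\,d\mu>\alpha_j$ is immediate since $\nu$ is carried by $Y\subset[G^n]$. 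One small point you should also record is that $\int\phi\,d\mu>-\infty$, so that $F(\mu)\neq-\infty$; this follows because \eqref{Gibbs} applied to the length-$(n-1)$ prefixes bounds $S_{n-1}\phi$ from below on $[G^n]$.

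The final estimate, however, which you yourself flag as the main obstacle, is asserted rather than proved, and with your choices it does not close to the stated constant. Writing $\log\mu_\phi[G^n]=H(p)+\sum_w p_w\log\mu_\phi[w]$ and using \eqref{Gibbs} on the $n$-cylinder $[w]$ at the periodic point $x_w$ gives $\log\mu_\phi[w]\le -Pn+S_{n-1}\phi(x_w)+\phi(x_w)+\log c_0$; replacing $S_{n-1}\phi(x_w)$ by $\frac{1}{p_w}\int_{[w]\cap Y}S_{n-1}\phi\,d\nu$ costs $D_{n-1}(\phi)$, which is only bounded by $2\log c_0$, and the leftover term $\phi(x_w)-P$ is only bounded above by $\log c_0$ (again by \eqref{Gibbs} with $n=1$); altogether this route yields $\log\mu_\phi[G^n]\le(n-1)F(\mu)+4\log c_0$, and nothing forces the extra terms to be nonpositive, so the claimed compression into a single $\log c_0$ is unjustified. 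The repair is simple and is exactly the paper's estimate: since $[w]\subset[w_0,\ldots,w_{n-2}]$, \eqref{Gibbs} applied to the $(n-1)$-cylinder gives $\mu_\phi[w]\le c_0\exp\left(-P(n-1)+S_{n-1}\phi(x)\right)$ for every $x\in[w]$, i.e. $\inf_{[w]}\exp S_{n-1}\phi\ge c_0^{-1}e^{P(n-1)}\mu_\phi[w]$; averaging this pointwise bound against $\nu$ on $[w]\cap Y$ and summing over $w$ gives $\sum_w p_w\log\mu_\phi[w]\le\log c_0-P(n-1)+\int S_{n-1}\phi\,d\nu$, hence $\log\mu_\phi[G^n]\le(n-1)F(\mu)+\log c_0$, with no periodic points and no distortion term at all. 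With that substitution your Bernoulli measure proves the lemma with the exact constant; as written, the constant-tracking step is a genuine gap.
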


 \begin{proof}
   Put $\widehat\sigma=\sigma^{n-1}$,
    $\widehat\phi=S_{n-1}\phi$ and define
 $K=\bigcap_{m\in\mathbb N}{\widehat\sigma}^{-m}[G^{n}].$
 Then $K$ is a compact set and $\widehat\sigma|_K\colon K\to K$ 
 is topologically conjugate to the full shift on $\#G^n$-symbols.
 By Lemma \ref{impose}, 
 for every integer $m\geq1$ and all 
  $x,y\in K$
 such that for each $i=0,\ldots,m-1$ there exists $\omega\in G^{n}$ with
 $\widehat\sigma^ix,\widehat\sigma^iy\in[w]$, we have
  $$ \sum_{i=0}^{m-1}\widehat
 \phi(\widehat\sigma^ix)-\widehat
 \phi(\widehat\sigma^iy)\leq D_{m(n-1)}(\phi)\leq \sup_{n\geq1}D_{n}(\phi)<\infty.$$
 Fix $z\in K$.  The variational principle \cite[Lemma 1.20]{Bow75} gives
 \begin{equation}\label{previous}\sup_{\widehat\nu\in\mathcal M(\widehat\sigma|_K)}\left(h_{\widehat\sigma|_K}(\widehat\nu)+\int\widehat\phi d\widehat\nu\right)=
 \lim_{m\to\infty}\frac{1}{m}\log\sum_{x\in (\widehat\sigma|_K)^{-m}z}\exp\left(\sum_{i=0}^{m-1}\widehat
 \phi(\widehat\sigma^ix)\right),\end{equation}
 where $\mathcal M(\widehat\sigma|_K)$ denotes 
 the space of $\widehat\sigma|_K$-invariant Borel probability measures
 endowed with the weak*-topology
 and $h_{\widehat\sigma|_K}(\widehat\nu)$ the entropy of $\widehat\nu\in \mathcal M(\widehat\sigma|_K)$
 with respect to $\widehat\sigma|_K$.
By \eqref{Gibbs},
$\inf_{[w]}\exp\widehat\phi\geq c_0^{-1}e^{P(\phi)(n-1)}\mu_\phi[w]$
holds for every $w\in G^{n}$.
Hence \begin{align*}\sum_{x\in (\widehat\sigma|_K)^{-m}z}\exp\left(\sum_{i=0}^{m-1}\widehat\phi(\widehat\sigma^ix)\right)
&\geq\left(\inf_{z'\in K}\sum_{x\in (\widehat\sigma|_K)^{-1}z'}\exp\widehat\phi(x)\right)^m\\
&\geq\left(
c_0^{-1}e^{P(\phi)(n-1)}\mu_\phi[G^{n}]\right)^m.\end{align*}
Taking logs, dividing by $m$ and letting $m\to\infty$, 
$$\lim_{m\to\infty}\frac{1}{m}\log\sum_{x\in (\widehat\sigma|_K)^{-m}z}\exp\left(
\sum_{i=0}^{m-1}\widehat\phi(\widehat\sigma^ix)\right)
\geq\log\left(c_0^{-1}e^{P(\phi)(n-1)}\mu_\phi[G^{n}]\right).$$
Plugging this into \eqref{previous} yields
$$\sup_{\widehat\nu\in\mathcal M(\widehat\sigma|_K)}\left(h_{\widehat\sigma|_K}(\widehat\nu)+\int\widehat\phi d\widehat\nu\right)\geq
\log\left(c_0^{-1}e^{P(\phi)(n-1)}\mu_\phi[G^{n}]\right).$$
Since $\mathcal M(\widehat\sigma|_K)$ is compact 
and the mapping $\widehat\nu\in\mathcal M(\widehat\sigma|_K)\mapsto h_{\widehat\sigma|_K}(\widehat\nu)+\int\widehat\phi d\widehat\nu $
is upper semi-continuous, there exists a measure $\widehat\mu\in \mathcal M(\widehat\sigma|_K)$
which attains the supremum of the left-hand side. The measure
$\mu = (1/(n-1)) \sum_{i=0}^{n-2}\widehat\mu\circ\sigma^{-i}$ is $\sigma$-invariant and
satisfies the desired properties.
 \end{proof}

\subsection{Key upper bound}\label{key}
For an integer $l\geq1$, 
 $\varphi_j\in C_u(X)$ 
and $\alpha_j\in \mathbb R$ for $j=1,\ldots,l$ denote by
$\overline{\mathcal V}\{\varphi_j,\alpha_j\}_{j=1,\ldots,l}$
the weak*-closure of $\mathcal V\{\varphi_j,\alpha_j\}_{j=1,\ldots,l}$, namely
$$\overline{\mathcal V}\{\varphi_j,\alpha_j\}_{j=1,\ldots,l}=\left\{\mu\in\mathcal M\colon \int\varphi_j
d\mu\geq\alpha_j\text{ for }
j=1,\ldots,l\right\}.$$
\begin{prop}
\label{upper0}
Let $X$ be finitely irreducible,
 $\phi\colon X\to \mathbb R$ a measurable function 
and $\mu_\phi$ a Gibbs state for the potential $\phi$.
Let $l\geq1$ be an integer, 
 $\varphi_j\in C_u(X)$ 
and $\alpha_j\in \mathbb R$ for $j=1,\ldots,l$.
For every $\epsilon>0$,
$$ \limsup_{n\to\infty}\frac{1}{n} \log \xi_{n} (\overline{\mathcal V}\{\varphi_j,\alpha_j\}_{j=1,\ldots,l})
\leq \sup\left\{F(\mu)\colon\mu\in\mathcal V\{\varphi_j,\alpha_j-\epsilon\}_{j=1,\ldots,l}\right\},$$
If moreover $X$ is finitely primitive, then
the same conclusion continues to hold with $\xi_n$ replaced by $\eta_n$, $\eta_{a,n}$,
 $\zeta_{y,n}$ and $\zeta_{a,y,n}$ with $a\in \mathbb N$, $y\in X$.
\end{prop}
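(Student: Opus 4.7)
The plan is to reduce the upper bound to an application of Lemma \ref{horse} by first collecting the length-$n$ cylinders responsible for the event $\overline{\mathcal V}=\overline{\mathcal V}\{\varphi_j,\alpha_j\}_{j=1,\ldots,l}$ and then extending each of them at both ends by strings from $\Lambda$ into a finite Markov system. I describe the argument for $\xi_n$ in detail; the remaining four sequences are handled by essentially the same mechanism. Fix $\epsilon>0$ and let $W_n$ denote the set of $w\in E^n$ whose cylinder meets $\{x\colon\delta_x^n\in\overline{\mathcal V}\}$. Since $\varphi_j\in C_u(X)$, Lemma \ref{varlem} gives $D_n(\varphi_j)=o(n)$, so for $n$ large one has $\inf_{[w]}(1/n)S_n\varphi_j\geq\alpha_j-\epsilon/4$ for every $w\in W_n$ and every $j$. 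The trivial inclusion yields $\xi_n(\overline{\mathcal V})\leq\sum_{w\in W_n}\mu_\phi[w]$, and this cylinder sum is what must be controlled.

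Fix a reference symbol $a\in\mathbb N$, use finite irreducibility to select $\lambda(a,k),\lambda(k,a)\in\Lambda$ for every $k\in\mathbb N$, and set $N=\max_{\lambda\in\Lambda}|\lambda|$. For $w\in W_n$, define the extension $\tilde w=a\lambda(a,w_0)\,w\,\lambda(w_{n-1},a)a\in E^{n'}(a,a)$ with $n'=n+2+|\lambda(a,w_0)|+|\lambda(w_{n-1},a)|$. Applying the Gibbs estimate \eqref{Gibbs} to both $w$ and $\tilde w$ and using $\sup_n D_n(\phi)<\infty$ from Lemma \ref{impose}, one obtains a constant $K=K(a,\Lambda,\phi)$, independent of $n$ and $w$, with $\mu_\phi[w]\leq K\mu_\phi[\tilde w]$. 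Partitioning $W_n$ by the pair $(|\lambda(a,w_0)|,|\lambda(w_{n-1},a)|)\in\{0,\ldots,N\}^2$ produces at most $(N+1)^2$ finite Markov systems $(a,G^{n'}_{n_1,n_2})\subset E^{n'}(a,a)$, on each of which $w\mapsto\tilde w$ is injective, so $\sum_{w\in W_n}\mu_\phi[w]\leq K(N+1)^2\max_{n_1,n_2}\mu_\phi[G^{n'}_{n_1,n_2}]$. Since each $\varphi_j$ is bounded and the boundary length $n_1+n_2+2$ is $O(1)$, the hypothesis $\inf_{[G^{n'}_{n_1,n_2}]}S_{n'-1}\varphi_j>(\alpha_j-\epsilon/2)(n'-1)$ of Lemma \ref{horse} is satisfied for $n$ large. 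That lemma then produces a $\sigma$-invariant $\mu\in\mathcal V\{\varphi_j,\alpha_j-\epsilon/2\}_{j=1,\ldots,l}\subset\mathcal V\{\varphi_j,\alpha_j-\epsilon\}_{j=1,\ldots,l}$ with $\log\mu_\phi[G^{n'}_{n_1,n_2}]\leq F(\mu)(n'-1)+\log c_0$; assembling the estimates and dividing by $n$ (noting $(n'-1)/n\to 1$) yields the desired $\limsup$ bound.

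The same scheme handles the weighted distributions, using finite primitiveness so that $|\lambda|=N$ is constant and only one extension length occurs. Each contributing orbit point $x$ is parametrized by a unique cylinder $w_x\in E^n$ with an extra admissibility constraint (closing up to $a$ for periodicity, matching $y_0$ for pre-images), and the Gibbs property \eqref{Gibbs} together with $\sup_n D_n(\phi)<\infty$ gives $\exp S_n\phi(x)\leq c_0 e^{D+P(\phi)n}\mu_\phi[w_x]$ with $D=\sup_n D_n(\phi)$; the normalizers are then handled by Proposition \ref{pressure}, which cancels the $e^{P(\phi)n}$ factor. The main obstacle throughout is the uniform distortion bound $\mu_\phi[w]\leq K\mu_\phi[\tilde w]$ with $K$ independent of the unbounded endpoints $w_0,w_{n-1}$: this uniformity is what forces the $\Lambda$-boundary corrections to vanish on the exponential scale, and it rests crucially on the combination of the Gibbs property, the finiteness of $\Lambda$, and Lemma \ref{impose}. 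Without this ingredient the boundary correction would depend on $n$ through the alphabet and the argument would collapse.
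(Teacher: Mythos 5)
Your overall strategy coincides with the paper's: reduce $\xi_n(\overline{\mathcal V})$ to a sum of $\mu_\phi$-measures of $n$-cylinders meeting the event, glue each such cylinder by $\Lambda$-strings into a cylinder of a Markov system based at a fixed symbol, feed the result into Lemma \ref{horse}, and cancel the normalizing factors for the weighted distributions with Proposition \ref{pressure}. The uniform distortion bound $\mu_\phi[w]\leq K\mu_\phi[\tilde w]$ you rely on is indeed correct and uniform in $w$ and $n$: it follows from two applications of Lemma \ref{distor}(a) together with the finiteness of $\Lambda$, exactly as you indicate.

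The genuine gap is the claim that partitioning $W_n$ by the pair of extension lengths ``produces at most $(N+1)^2$ finite Markov systems.'' The classes $G^{n'}_{n_1,n_2}$ inherit the cardinality of $W_n$, which is in general infinite (for instance with $l=1$, $\varphi_1\equiv0$ and $\alpha_1<0$ one has $W_n=E^n$). Lemma \ref{horse} is stated, and proved, only for a \emph{finite} subset of $E^{n'}(a,a)$: its proof uses that $K=\bigcap_{m}\widehat\sigma^{-m}[G^{n'}]$ is compact and $\widehat\sigma|_K$ is conjugate to a full shift on finitely many symbols, so that Bowen's variational principle applies and the upper semi-continuity of $\widehat\nu\mapsto h_{\widehat\sigma|_K}(\widehat\nu)+\int\widehat\phi\,d\widehat\nu$ on the compact space $\mathcal M(\widehat\sigma|_K)$ yields a maximizing measure. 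None of this is available when $G^{n'}_{n_1,n_2}$ is infinite, so the application of Lemma \ref{horse} is unjustified as written. The repair is precisely the step the paper performs: within a fixed class the cylinders $[\tilde w]$ are pairwise disjoint (same length, injective extension), so the class sum converges, and one may extract a finite subfamily $B\subset G^{n'}_{n_1,n_2}$ carrying, say, half of $\sum_{\tilde w\in G^{n'}_{n_1,n_2}}\mu_\phi[\tilde w]$ and apply Lemma \ref{horse} to the finite Markov system $(a,B)$ at the cost of an additive constant in the logarithm. With that insertion your argument goes through and is essentially the paper's proof; the remaining differences (the paper prepends a single symbol from the finite set of $\Lambda$-endpoints and appends one $\Lambda$-string, partitioning by its length, whereas you symmetrize around one fixed base symbol) are inessential.
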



\begin{proof}
It is convenient to split the proof of Proposition \ref{upper0} into three steps.
In Lemma \ref{horse} we have already shown that 
finite Markov systems can be used for bounding measures from above.
Write $\overline{\mathcal V}$ for $\overline{\mathcal V}\{\varphi_j,\alpha_j\}_{j=1,\ldots,l})$.
In order to capture the set $\overline{\mathcal V}$, in Step 1 we 
take advantage of the finite irreducibility and construct 
 finitely many finite Markov systems.
 In the remaining steps we treat each sequence of distributions separately.
 \medskip

\noindent{\it Step 1. (Reduction to finitely many finite Markov systems).}
Let $\Lambda$ be the finite subset of $E^*$ given by the finite irreducibility of $X$,
and put
$|\Lambda|=\max_{\lambda\in\Lambda}|\lambda|$.
If $\Lambda\neq\emptyset$, define $\mathbb N_\Lambda$ to be the set of $a\in \mathbb N$ for which 
there exists $\lambda=\lambda_0\cdots\lambda_{|\lambda|-1}\in\Lambda$
with $\lambda_{|\lambda|-1}=a$.
If $\Lambda=\emptyset$, put $\mathbb N_\Lambda=\{0\}$.
Set
$$c_1=\inf\{\mu_\phi[w]\colon w\in \mathbb N_\Lambda\cup \Lambda \}.$$
Since $\mu_\phi$ is a Gibbs state and $\mathbb N_\Lambda$, $\Lambda$ are finite sets, $c_1>0$ holds.
Let $\epsilon>0$ be as in Proposition \ref{upper0}.
For an integer $n>1$ and $a\in \mathbb N_\Lambda$ define 
\begin{equation}\label{hna}
H^{n}(a)=\left\{w\in E^{n}(a)\colon
\delta_z^{n}\in \overline{\mathcal V}\left\{\varphi_j,\alpha_j-\frac{\epsilon}{3}\right\}_{j=1,\ldots,l}\text{ for some }z\in[w]\right\},\end{equation}
and $$Y_n=\bigcup_{a\in \mathbb N_\Lambda}\sigma[H^{n}(a)].$$

\begin{lemma}\label{claim}
For sufficiently large integer $n>1$,
$$\mu_\phi\{x\in X\colon \delta_x^{n-1}\in \overline{\mathcal V}\}\leq
\mu_\phi(Y_n).$$
\end{lemma}
\begin{proof}
From the finite irreducibility and the definition of $\mathbb N_\lambda$,
for each $b\in \mathbb N$ there exists $a\in \mathbb N_\Lambda$ 
 with $ab\in E^*$.
It follows that for each $x\in X$ 
there exist $a\in \mathbb N_{\Lambda}$
and $y\in[a]$ with
 $x=\sigma y$. Hence, there exists an integer $n'>1$
 which depends only on $\{\varphi_j,\alpha_j\}_{j=1,\ldots,l}$, $\Lambda$ and $\epsilon$ such that
  for every $n\geq n'$ with $\delta_x^{n-1}\in \overline{\mathcal V}$,
we have \begin{align*}
S_n\varphi_j(y)=\varphi_j(y)+S_{n-1}\varphi_j(x)\geq \inf_{[\mathbb N_{\Lambda}]}\varphi_j+(n-1)\alpha_j\geq \left(\alpha_j-\frac{\epsilon}{3}\right)n,\end{align*}
  for $j=1,\ldots,l$.
This yields $y\in [H^n(a)]$, and thus $x\in Y_n$ as required.
 \end{proof}

We bound $\mu_\phi(Y_n)$ from above by constructing finitely many finite Markov systems
each based at $[a]$, $a\in\mathbb N_\Lambda$.
 In what follows, in view of Lemma \ref{varlem} we assume $n$ is large enough so that
$D_{n}(\varphi_j)\leq\epsilon n/3$ holds for $j=1,\ldots,l$.

Let $a\in \mathbb N_{\Lambda}$.
For each $w\in H^{n}(a)$ fix $\kappa=\kappa(w)\in\Lambda$ 
with $w\kappa a\in E^*$.
By the definition \eqref{hna}, for each $w\in H^{n}(a)$ there exists $z=z(w)\in [w]$ such that $S_{n}\varphi_j(z)\geq (\alpha_j-\frac{\epsilon}{3})n$
for $j=1,\ldots,l$.
For every $x\in [w\kappa]$ we have
\begin{align*}
S_{n+|\kappa|}\varphi_j(x)
&=S_{n}\varphi_j(z)+S_{n}\varphi_j(x)-S_{n}\varphi_j(z)+S_{|\kappa|}\varphi_j(\sigma^{n}x)\\
&\geq S_{n}\varphi_j(z)-D_{n}(\varphi_j)+S_{|\kappa|}\varphi_j(\sigma^{n}x)\\
&\geq \left(\alpha_j-\frac{2\epsilon}{3}\right)n+\inf_{\lambda\in\Lambda} \inf_{[\lambda]}S_{|\lambda|}\varphi_j,
\end{align*}
for $j=1,\ldots,l$.
Since $\Lambda$ is a finite set and each $\varphi_j$ is bounded, the last term of the last line is bounded. It follows that
for sufficiently large $n$,
\begin{equation}\label{upeq4}\inf_{[w\kappa]} S_{n+|\kappa|}\varphi_j>(\alpha_j-\epsilon)(n+|\kappa|),\end{equation} 
  for $j=1,\ldots,l$ and every $w\in H^n(a)$.
  
  Summing the inequality
$\mu_\phi[w]\leq c_0^3c_1^{-1}\mu_\phi[w\kappa]$ 
 over all $w\in H^{n}(a)$ which follows from
 Lemma \ref{distor}(a) gives
\begin{equation}\label{upeq0}\log\mu_\phi[H^{n}(a)]
\leq\log \sum_{w\in 
H^{n}(a)}\mu_\phi[w\kappa]+\log(c_0^3c_1^{-1}).\end{equation}
It can happen that $\#H^{n}(a)=\infty$.
Since the cylinders corresponding to the strings in $H^{n}(a)$ are pairwise disjoint and 
$\mu_\phi$ is a finite measure,
the summand of the right-hand side is bounded and
it is possible to 
choose a finite subset $B^n(a)$ of $H^{n}(a)$ such that
\begin{equation*}\label{upeq1}
\log\sum_{w\in H^{n}(a)}\mu_\phi[w\kappa]\leq\log\sum_{w\in B^n(a)}\mu_\phi[w\kappa]+1.
\end{equation*}
For each $s\in\{0,\ldots,|\Lambda|\}$
define $B_s^n(a)=\{w\in B^n(a)\colon|\kappa(w)|=s\}$.
Pick $s_0\in\{0,\ldots,|\Lambda|\}$ with
$$\sum_{w\in B^n(a)}\mu_\phi[w\kappa ]\leq(|\Lambda|+1)  \sum_{w\in B_{s_0}^n(a)}\mu_\phi[w\kappa ].$$
Combining this inequality with the previous one gives
\begin{equation}\label{upeq10}
\log\sum_{w\in H^{n}(a)}\mu_\phi[w\kappa]\leq\log\sum_{w\in B_{s_0}^n(a)}\mu_\phi[w\kappa]+\log(|\Lambda|+1)+1.
\end{equation}
Let us simply denote by {\rm const.} any constant which depends only on $X$ and $\mu_\phi$.
Since $(a, \{w\kappa a\}_{w\in B_{s_0}^n(a)})$ is a finite Markov system,
by Lemma \ref{horse} and \eqref{upeq4}
there exists a $\sigma$-invariant measure $\mu^a\in\mathcal M$ which is supported on a compact set and satisfies
\begin{equation}\label{upeq2}
\begin{split}
\log\sum_{w\in B_{s_0}^n(a)}\mu_\phi[w\kappa ]&\leq\log\sum_{w\in B_{s_0}^n(a)}\mu_\phi[w\kappa a ]+{\rm const.}\\
&\leq  F(\mu^a)(n-1)
  +{\rm const.},
  \end{split}\end{equation}
  and 
$ \int\varphi_j d\mu^a>\alpha_j-\epsilon$ for $j=1,\ldots,l$. 
From \eqref{upeq0}, \eqref{upeq10} and \eqref{upeq2} we obtain
\begin{equation}\label{spl}
\log\mu_\phi[H^{n}(a)]\leq  F(\mu^a)(n-1)+{\rm const.}
\end{equation}
\medskip

\noindent{\it Step 2. (Upper bound for empirical means).}
In Step 1 we have constructed for each $a\in\mathbb N_{\Lambda}$ a measure $\mu^a$.
Pick $\mu\in\{\mu^a\}_{a\in\mathbb N_{\Lambda}}$ with
$F(\mu)=\max_{a\in\mathbb N_\Lambda} F(\mu^a)$.
Let $n>1$ be a large integer for which $\xi_{n-1}(\overline{\mathcal V})>0$
holds. Then
\begin{align*}\log\xi_{n-1}(\overline{\mathcal V})&=\log\mu_\phi
\{x\in X\colon \delta_x^{n-1}\in \overline{\mathcal V}\}\\
&\leq\log\mu_\phi(Y_n)\quad\text{by Lemma \ref{claim}}\\
&\leq\log\sum_{a\in\mathbb N_\Lambda}\mu_\phi[H^n(a)]+{\rm const.}\quad\text{by Lemma \ref{distor}(a)}\\
&\leq F(\mu)(n-1)+\log\#\mathbb N_{\Lambda}+{\rm const.}\quad\text{by \eqref{spl}},
\end{align*}
which implies the desired inequality in Proposition \ref{upper0} for $\xi_n$.
\medskip

\noindent{\it Step 3. (Upper bounds for weighted periodic points and iterated pre-images).}
Assume $X$ is finitely primitive.
Let $n\geq1$ be a large integer for which
$\eta_n (\overline{\mathcal V})>0$ holds.
Then
$\xi_n (\overline{\mathcal V}\{\varphi_j,\alpha_j-\epsilon/2\}_{j=1,\ldots,l})>0$,
and thus the argument in Step 1 with $\alpha_j$ replaced by $\alpha_j-\epsilon/2$ works. 
We have
\begin{align*}
 \eta_n (\overline{\mathcal V})=&\frac{1}{Z_{n}(\phi,{\rm Per}_n(\sigma))}
\sum_{\stackrel{x\in{\rm Per}_n(\sigma)}{\delta_x^n\in  
 \overline{\mathcal V}}}
 \exp S_n\phi(x)\\
 \leq&\frac{1}{Z_{n}(\phi,{\rm Per}_n(\sigma))}
\sum_{x\in{\rm Per}_n(\sigma)\cap Y_n}
 \exp S_n\phi(x)\quad\text{by Lemma \ref{claim}}\\
  \leq&\frac{c_0e^{P(\phi)n}}{Z_{n}(\phi,{\rm Per}_n(\sigma))}
\sum_{x\in{\rm Per}_n(\sigma)\cap Y_n}
\mu_\phi[x_0,\ldots,x_{n-1}]\quad\text{by \eqref{Gibbs}.}
\end{align*}
From the definition of $\mathbb N_{\Lambda}$, 
for each $x=(x_i)_{i\in\mathbb N}\in{\rm Per}_n(\sigma)\cap Y_n$ there exists $a\in \mathbb N_{\Lambda}$
such that $ax_0\cdots x_{n-1}\in H^{n+1}(a)$.
Lemma \ref{distor}(a) gives
$\mu_\phi[x_0,\ldots,x_{n-1}]\leq c_0^3c_1^{-1}\mu_\phi[a,x_0,\ldots,x_{n-1}]$.
Summing this over all $x\in{\rm Per}_n(\sigma)\cap Y_n$
and using \eqref{spl},
\begin{align*}
\log\sum_{x\in{\rm Per}_n(\sigma)\cap Y_n}
\mu_\phi[x_0,\ldots,x_{n-1}]\leq& \log\sum_{a\in \mathbb N_{\Lambda}}\mu_\phi[H^{n+1}(a)]+
{\rm const.}\\
\leq&F(\mu)n
+{\rm const.}\end{align*}
For sufficiently large $n$,
\begin{align*}
\frac{1}{n} \log \eta_n (\overline{\mathcal V})  \leq& -\frac{1}{n}\log Z_{n}(\phi,{\rm Per}_n(\sigma))+
P(\phi)+\frac{1}{n}\log c_0 \\&+
 \frac{1}{n}\log\sum_{x\in{\rm Per}_n(\sigma)\cap Y_n}
 \mu_\phi[x_0,\ldots,x_{n-1}]\\
 \leq&  -\frac{1}{n}\log Z_{n}(\phi,{\rm Per}_n(\sigma))+P(\phi)+F(\mu)+\frac{\rm const}{n}.\end{align*}
As $n\to\infty$, the first term converges to $P(\phi)$ by Proposition \ref{pressure}
and so the desired inequality in Proposition \ref{upper0} holds for $\eta_n$.
 That for $\eta_{a,n}$ ($a\in \mathbb N$) is obtained simply by
 replacing ${\rm Per}_n(\sigma)$ in the above formulas by $[a]\cap{\rm Per}_n(\sigma)$.
 Proofs for the distributions $\zeta_{y,n}$, $\zeta_{a,y,n}$ 
($a\in \mathbb N$, $y\in X$) are analogous and omitted.
 \end{proof}
 
 \subsection{End of proof of the upper bound}\label{completes}
 We are in position to finish the proof of the upper bound \eqref{up} for all closed sets and complete
 the proofs of all the theorems.
 
 \begin{proof}[Proof of the upper bound for closed sets]

Let $X$ be finitely irreducible,
 $\phi\colon X\to\mathbb R$ a measurable function and
 $\mu_\phi$ a Gibbs state for the potential $\phi$.
By virtue of the exponential tightness in Proposition \ref{tight},
we have only to consider compact closed sets (see \cite[Lemma 1.2.18(a)]{DemZei98}, \cite[Theorem 2.19]{RasSep}).

Let $\mathcal{K}\subset\mathcal M$ be a compact set.
Let $\mathcal G$ be an open set containing $\mathcal K$. 
Since the weak*-topology is metrizable and $\mathcal K$ is compact,
it is possible to choose a finite number of open sets
$\mathcal V_1,\ldots,\mathcal V_r$ of the form
$\mathcal V_k=\mathcal V\{\varphi_j,\alpha_j\}_{j=1,\ldots,l}$ satisfying
$\mathcal K\subset\bigcup_{k=1}^r\overline{\mathcal V_k} \subset 
\bigcup_{k=1}^r\overline{\mathcal V_k}(\epsilon_0)\subset\mathcal G$
for some $\epsilon_0>0$, where 
$\mathcal V_k(\epsilon_0)=\mathcal V\{\varphi_j,\alpha_j-\epsilon_0\}_{j=1,\ldots,l}$.
Proposition \ref{upper0} gives
$$\limsup_{n\to\infty}\frac{1}{n}\log\xi_n(\overline{\mathcal V_k})\leq \sup_{\mathcal V_k(\epsilon)}F,$$
for every $\epsilon\in(0,\epsilon_0]$.
Hence
\begin{align*}
 \limsup_{n\to\infty}\frac{1}{n}
\log\xi_n\left(\bigcup_{k=1}^r\overline{\mathcal V_k} \right)
&\leq
\max_{1\leq k\leq r} \limsup_{n\to\infty}\frac{1}{n}\log
\xi_n(\overline{\mathcal V_k})\\
& \le \max_{1\leq k\leq  r}  
\sup_{ \mathcal V_k (\epsilon)}F  
\\ & \le
\sup_{\mathcal G}F,
\end{align*}
and thus
$
\displaystyle{\limsup}$$(1/n)\log\xi_n(\mathcal K)
\le \sup_{\mathcal G}F.$
Since $\mathcal G$ is an arbitrary open set containing 
 $\mathcal K$ it follows that
\begin{align*}
\limsup_{n\to\infty}\frac{1}{n}\log\xi_n(\mathcal K)
&\leq
\inf_{\mathcal G \supset \mathcal{K}} \sup_{\mathcal G}F=
\inf_{\mathcal G \supset \mathcal{K}} \sup_{\mathcal G} (-I)=
-\inf_{\mathcal K}I.\end{align*}
The last equality is due to the upper semi-continuity of $-I$. 
In the case $X$ is finitely primitive, 
the upper bounds for all closed sets and for all sequences of distributions
other than $\{\xi_n\}$ follow from
 Propositions \ref{tight} and
 \ref{upper0} in the same way.

The lower bound 
\eqref{low} for all open sets obtained in $\S3$ and the exponential tightness in Proposition \ref{tight} together imply that 
the rate function
$I$ is the good rate function \cite[Lemma 1.2.18(b)]{DemZei98}.
The convexity of $I$ follows from the affine character of $F$. The proofs of Theorems A, B
and C are now complete.
\end{proof}


The following example for the full shift is due to Jenkinson-Mauldin-Urba\'nski \cite[p.774]{JMU14}.
  For each integer $k\geq1$ denote by $\mu_k$ the Bernoulli measure generated by the collection $[k]$, $[k+1],\ldots,[k+2^k-1]$
 of $1$-cylinders.  Then $h(\mu_k)=k\log2$ holds.
 Put $\nu_k=(1-1/k)\delta_{\bar 0}+(1/k)\mu_k$ where $\bar 0=000\cdots$.
 Then $h(\nu_k)=\log2$, $h(\delta_{\bar 0})=0$ and $\nu_k$ converges to $\delta_{\bar 0}$ in the weak*-topology
 as $k\to\infty$.
 
 One can replace $\delta_{\bar 0}$ by an arbitrary measure with finite entropy and repeat
 the same construction to show that the entropy is not upper semi-continuous at this measure.
 As a result, the function $F$ in Theorem A is not upper semi-continuous at every measure with finite entropy.

\subsection*{Acknowledgments}
 I thank 
 Naotaka Kajino, Makiko Sasada and Mike Todd for fruitful discussions.
I also thank the referee for his or her careful reading of the manuscript and giving useful comments.
This research was partially supported by
the Grant-in-Aid for Young Scientists (A) of the JSPS 15H05435,
 the Grant-in-Aid for Scientific Research (B) of the JSPS 16KT0021 and
 the JSPS Core-to-Core Program ``Foundation
of a Global Research Cooperative Center in Mathematics focused on Number Theory and Geometry''.


\begin{thebibliography}{10}



 


  
\bibitem{Bow75}
Bowen, R.:
 {\it Equilibrium states and the ergodic theory of Anosov
  diffeomorphisms,} Second revised edition.
  Lecture Notes in Mathematics, {\bf 470}
   Springer-Verlag, Berlin 2008.
 

\bibitem{Bry92} Bryc, W.: On the large deviation principle for stationary weakly
dependent random fields. Ann. Prob. {\bf 20}, 1004--1030 (1992)

\bibitem{Com86} Comets, F.: Grandes d\'eviations pour des champs de Gibbs sur $\mathbb Z^d$.
C. R. Acad. Paris S\'er. I Math. {\bf 303}, 511--513 (1986)





\bibitem{DemZei98}
Dembo, A., Zeitouni, O.:
{\it Large deviations techniques and applications},  
  {\it Applications of Mathematics} {\bf 38},
\newblock Springer, second edition (1998)


\bibitem{DenKab07} Denker, M., Kabluchko, Z.: An Erd\"os-R\'enyi law for mixing processes.
Probab. Math. Statist. {\bf 27}, 139--149 (2007)

\bibitem{EKW94} Eizenberg, A., Kifer, Y., Weiss, B.: 
Large deviations for $\mathbb Z^d$-actions. Commun. Math. Phys.
{\bf 164}, 433--454 (1994) 




\bibitem{Ell85}
Ellis, R.S.:
\newblock {\em Entropy, large deviations, and statistical mechanics}, 
   Grundlehren der Mathematischen Wissenschaften {\bf 271},
\newblock Springer (1985)


\bibitem{FieFieYur02} Fiebig, D., Fiebig, U.-R., Yuri, M.:
Pressure and equilibrium states for countable state Markov shifts.
Israel J. Math. {\bf 131}, 221--257 (2002)



\bibitem{FolOre88} F\"ollmer, H., Orey, S.:
Large deviations for the empirical field of a Gibbs measure.
The Annals of Probability {\bf 16}, 961--977 (1988)



\bibitem{JMU14} Jenkinson, O., Mauldin, R.D., Urba\'nski, M.:
Zero temperature limits of Gibbs-equilibrium states for countable alphabet
subshifts of finite type. Journal of Statistical Physics. {\bf 119}, 765--776 (2005)


\bibitem{Kif90} Kifer, Y.: Large deviations in dynamical systems and stochastic processes,
Trans. Amer. Math. Soc. {\bf 321}, 505--524 (1990)

\bibitem{Kif94} Kifer, Y.: Large deviations, averaging and periodic orbits of dynamical systems,
Commun. Math. Phys. {\bf 162},  33--46 (1994)


\bibitem{MauUrb01} Mauldin, R.D., Urba\'nski, M.: Gibbs states on the symbolic space over
infinite alphabet. Israel J. Math. {\bf 125},  93--130 (2001)

\bibitem{MauUrb03} Mauldin, R.D., Urba\'nski, M.: 
{\it Graph directed Markov systems: Geometry and Dynamics of Limit Sets.} Cambridge Tracts in Mathematics,
 {\bf 148} Cambridge University Press 2003.



\bibitem{MelNic08}
Melbourne, I., Nicol, M.:
Large deviations for nonuniformly hyperbolic systems.
 Trans. Amer. Math. Soc. {\bf 360}, 6661--6676 (2008)









\bibitem{Oll88} Olla, S.: Large deviations for Gibbs random fields.
Prob. Theory and Rel. Fields {\bf 77}, 343--357 (1988)

\bibitem{OrePel89}
Orey, S., Pelikan, S.: Deviations of trajectory averages and the defect in {P}esin's formula
  for {A}nosov diffeomorphisms.
Trans. Amer. Math. Soc. {\bf  315}, 741--753 (1989)


\bibitem{RasSep} Rassoul-Agha, F., Sepp\"al\"ainen, T.:
{\it A course on large deviations with an introduction to Gibbs measures.}
Graduate Studies in Mathematics, {\bf 162}, American Mathematical Society,
Providence, RI (2015)




\bibitem{ReyYou08}
 Rey-Bellet, L., Young, L.-S.:
Large deviations in non-uniformly hyperbolic dynamical systems.
Ergodic Theory and Dynamical Systems  {\bf 28}, 587--612 (2008)


\bibitem{Rue78} Ruelle, D.: {\it Thermodynamic formalism. The mathematical structures of classical equilibrium statistical mechanics.} 
Second edition. Cambridge University Press (2004)


\bibitem{Sar99} Sarig, O.: Thermodynamic formalism for countable Markov shifts.
Ergodic Theory and Dynamical Systems {\bf 19}, 1565--1593 (1999)



\bibitem{Sar03} Sarig, O.: Existence of Gibbs measures
for countable Markov shifts, Proc. Amer. Math. Soc. {\bf 131}, 1751--1758 (2003)

\bibitem{Sar15} Sarig, O.: Thermodynamic formalism for countable Markov shifts.
Proceedings of Symposia in Pure Mathematics. {\bf 89}, 81--117 (2015)

\bibitem{Str11} Stroock, D.W.: {\it Probability theory. An analytic view. Second edition.}
Cambridge University Press (2011)

\bibitem{Tak84}
Takahashi, Y.:
\newblock Entropy functional (free energy) for dynamical systems and their
  random perturbations.
\newblock In {\em Stochastic analysis ({K}atata/{K}yoto, 1982)}, North-Holland Math. Library, {\bf 32}, 437--467. North-Holland, Amsterdam
(1984) 


\bibitem{Tak87}
Takahashi, Y.:
\newblock Asymptotic behaviours of measures of small tubes: entropy,
  {L}iapunov's exponent and large deviation.
\newblock In {\em Dynamical systems and applications ({K}yoto, 1987)}, 
   {\rm World Sci. Adv. Ser. Dynam. Systems} {\bf 5}, 1--21 (1987)
   
 \bibitem{Tak} Takahasi, H.: Entropy-approachability for transitive Markov shifts over infinite alphabet, 
Proc. Amer. Math. Soc. to appear 

  
  
  \bibitem{Var84} Varadhan, S.R.S.: {\it Large Deviations and Applications.} SIAM, Philadelphia. (1984)
  
  
    



\bibitem{You90} Young, L.-S.: Some large deviations
for dynamical systems. Trans. Amer. Math. Soc. {\bf 318}, 525--543 (1990)



\bibitem{Yur05} Yuri, M.: Large deviations for countable to one Markov systems.
Commun. Math. Phys. {\bf 258}, 455--474 (2005)

\end{thebibliography}
\end{document}